\documentclass[11pt]{article}

\usepackage[a4paper,margin=1in]{geometry}
\usepackage{amsmath,amssymb,amsfonts,amsthm,mathtools}
\usepackage{graphicx,booktabs,array,microtype}
\usepackage{xcolor}
\usepackage{subcaption}
\usepackage[colorlinks=true,linkcolor=blue!55!black,
            citecolor=blue!55!black,urlcolor=blue!55!black]{hyperref}

\newtheorem{theorem}{Theorem}[section]
\newtheorem{proposition}[theorem]{Proposition}

\newtheorem{corollary}[theorem]{Corollary}
\theoremstyle{remark}
\newtheorem{remark}[theorem]{Remark}

\newcommand{\R}{\mathbb{R}}

\newcommand{\Z}{\mathbb{Z}}
\newcommand{\Sone}{\mathbb{S}^{1}}
\newcommand{\ii}{\mathrm{i}}
\newcommand{\dd}{\mathrm{d}}
\newcommand{\rank}{\operatorname{rank}}
\newcommand{\diag}{\operatorname{diag}}
\newcommand{\Nc}{N_{\rm c}}
\newcommand{\Nh}{N_{\rm h}}
\newcommand{\Ns}{N_{\rm s}}
\newcommand{\Nang}{N_{\rm ang}}
\newcommand{\Var}{\operatorname{Var}}

\title{Fourier--Hankel Moment Recovery in Acoustic Scattering:
Multichannel Stabilization and Radial Interface Resolution}

\author{
Zhiliang Deng\thanks{Corresponding author. School of Mathematical Sciences, University of Electronic
Science and Technology of China. Email: dengzhl@uestc.edu.cn}
\and
Xiaofei Guan\thanks{School of Mathematical Sciences, Tongji University.
Email: guanxf@tongji.edu.cn}
\and
Xiaomei Yang\thanks{School of Mathematics,
Southwest Jiaotong University. Email: yangxiaomath@swjtu.edu.cn}
}

\date{}

\begin{document}
\maketitle

\begin{abstract}
We study direct recovery of visible phase centers and concentric radial
interfaces from full-aperture acoustic far-field data under the Born
approximation.  Two complementary moment structures are extracted from the
two-angle Fourier matrix.  At fixed positive total Fourier order, the
nonnegative-order channels share the same leading phase-center moment, and a
generalized least-squares combination yields an exact variance gain over a
single Fourier row.  The corresponding Hankel rank and shifted pencil recover
distinct phase centers.  Signed moments reveal off-center cavities but become
degenerate when material and cavity centers coincide.  Zero-total-order
coefficients retain complementary radial Bessel moments.  For a
piecewise-constant radial average, low-frequency extrapolation produces a
second finite exponential sequence whose nodes are the squared interface
radii.  We establish rank and perturbation results for both reductions.
Numerical experiments verify multichannel stabilization, concentric-cavity
resolution, and recovery of multiple radial interfaces.  A final full-wave
Helmholtz experiment, generated without the Born substitution, assesses the
Born-derived reconstruction under model mismatch and shows how nonlinear
scattering eventually appears as an additional Hankel tail.
\end{abstract}

\noindent\textbf{Keywords:}
acoustic inverse scattering; Fourier--Hankel moments; multichannel data;
Hankel rank; phase-center recovery; radial moments; cavity detection.

\section{Introduction}
\label{sec:introduction}

Inverse acoustic scattering seeks geometric or material information from waves
measured away from an unknown target.  With multistatic far-field data one may
reconstruct an entire contrast or boundary, or instead extract a smaller set
of geometric descriptors directly from the measurements.  The latter
viewpoint is natural when the quantities of interest are a component count,
representative locations, or the presence of a cavity rather than a full
image.  General treatments of qualitative and iterative inverse-scattering
methods can be found in
\cite{CakoniColton2014, ColtonKress2019, Kirsch2011, KirschGrinberg2008}.

Direct sampling and probe methods provide one route to geometric information
without solving a full nonlinear reconstruction problem
\cite{Potthast2006}.  In acoustic scattering, direct sampling methods construct
near- or far-field indicators for locating unknown media or obstacles
\cite{Ito2012,Li2013}.  The extended sampling method follows a related direct
philosophy, using the measured far field from a single incident wave to obtain
location and support information, with extensions to limited-aperture settings
\cite{Liu2018}.  These approaches recover geometric information through
spatially indexed indicators and are attractive because they avoid repeated
solution of a nonlinear inverse problem.

Other direct methods exploit finite-dimensional structure in the measured
scattering data.  Small-volume asymptotics encode target locations and
polarization information
\cite{AmmariIakovlevaMoskow2003,AmmariKang2004,AmmariKang2007},
while MUSIC, DORT, and related subspace methods use low-rank response matrices
to localize small scatterers and, in suitable regimes, estimate their number
\cite{Cheney2001,Devaney2000,FazliNakhkash2012,QuinlanBarbotLarzabal2006}.
Far-field splitting provides another route to the number and approximate
locations of separated components
\cite{GriesmaierHankeSylvester2014}.  More closely related to the use of
angular Fourier information, Hanke \cite{Hanke2012} combines Fourier
coefficients of a single scattered field with a Prony--Pad\'e construction
and repeated virtual origins, while Griesmaier and Hanke
\cite{GriesmaierHanke2025} use low-order Fourier coefficients and a tailored
rational approximation followed by localization from several virtual origins.
The approach developed here belongs to this broader family of direct
data-reduction methods, but exploits a different structure in the full
two-angle far-field data.  Rather than constructing a spatial sampling
indicator or using selected low-order coefficients, we organize the
two-angle Fourier coefficients into finite moment sequences and study the
resulting structured Hankel matrices.

Under the Born approximation, the Fourier coefficients of the two-angle
multistatic far field admit a Bessel--Fourier representation that gives rise
to two complementary finite-moment reductions in a fixed coordinate system.
The first reduction produces a phase-center sequence.  A single
incident-angle-averaged row already yields, at leading order, a finite
exponential sequence whose Hankel rank and shifted pencil encode the distinct
visible phase centers.  This row construction, however, becomes poorly scaled
at high Fourier order.  For each fixed positive total order, the nonnegative
Fourier pairs on the corresponding anti-diagonal contain the same leading
phase-center moment, multiplied by different known Bessel factors.  Their
generalized least-squares combination therefore yields a closed-form
fixed-frequency variance gain over the endpoint row.  Multiple frequencies
can be incorporated into the same fit, and a second Bessel term can be
retained when deterministic finite-frequency bias becomes significant.
Although multifrequency data are classical in inverse scattering
\cite{Bao2015}, their role here is more specific: they provide repeated
observations of the same phase-center moments and, in the second-order
construction, of their first radial correction.

Cavities expose a different limitation of the phase-center reduction.  Signed
moments detect a cavity whose center is separated from that of the surrounding
material, but a perfectly concentric cavity merges with the material node.
No increase in the number of positive-total-order channels can separate two
coincident phase nodes.  This degeneracy belongs to the phase-center
representation rather than to the full two-angle data.  The complementary
zero-total-order coefficients retain radial Bessel moments.  When the
angularly averaged contrast is piecewise constant in radius, a low-frequency
extrapolation produces a second finite exponential sequence, now in the
squared interface radii.  Its Hankel rank counts radial interfaces and its
shifted pencil recovers their locations.

The contribution of the paper lies in this scattering-specific data reduction
and its stabilization.  We derive the two-angle Bessel--Fourier identity, the
phase-center and radial moment sequences, the exact variance ratio for the
nonnegative total-order channel ensemble, and perturbation conditions for
rank and pencil recovery.  The finite-exponential Hankel factorization and
matrix-pencil step themselves are classical and are used here as algebraic
tools once the scattering data have been reduced to moments.  Related
moment--Hankel ideas have also been used for point-source counting in inverse
heat problems \cite{Deng2026_a,Deng2026_b}; the present setting differs at the
data-to-moment level through the two-angle Bessel--Fourier structure, the
total-order channel redundancy, and the complementary zero-total-order radial
family.

The subsequent contents are organized as follows. Section~\ref{sec:single} develops the two Fourier--Hankel structures.
Section~\ref{sec:multi} treats multichannel estimation, finite-frequency
corrections, and perturbation bounds.  Section~\ref{sec:numerics} presents the
numerical experiments, and Section~\ref{sec:conclusion} concludes the paper.

\section{Fourier--Hankel structures in the two-angle data}
\label{sec:single}

This section develops the two Fourier--Hankel structures used throughout the
paper.  We begin with a Bessel--Fourier representation of the two-angle
far-field coefficients, which makes explicit how the observation and incident
Fourier orders enter the data.  The representation is then reduced in two
different regimes.  Positive total order produces a phase-center moment
sequence whose Hankel structure encodes visible components and their
locations, while zero total order removes the angular phase and produces a
radial moment sequence associated with concentric interfaces.

\subsection{Far-field coefficients and the Bessel--Fourier identity}
\label{subsec:farfield}

Let \(q\in L^\infty(\R^2)\) be compactly supported in a bounded set \(D\).
For an incident plane wave
\[
u^i(x,d;k)=e^{\ii k d\cdot x},
\qquad
d\in\Sone,\quad k>0,
\]
the total field \(u=u^i+u^s\) satisfies
\begin{align}
\Delta u+k^2(1+q)u&=0
\qquad\text{in }\R^2,
\label{eq:helmholtz}
\end{align}
together with the Sommerfeld radiation condition.  With a fixed known
far-field normalization \(C_k\neq0\),
\begin{align}
u^\infty(\hat x,d;k)
&=
C_k\int_D
e^{-\ii k\hat x\cdot y}
q(y)u(y,d;k)\,\dd y.
\label{eq:farfield_exact}
\end{align}
The theoretical reduction below is derived under the Born approximation,
which replaces the total field in the integral by the incident field:
\begin{align}
u^\infty(\hat x,d;k)
&\approx
C_k\int_D
q(y)e^{-\ii k(\hat x-d)\cdot y}\,\dd y.
\label{eq:born_farfield}
\end{align}
The Born approximation is used here to expose a separable angular structure
from which the moment sequences follow.  The stability results below concern
perturbations of those reduced moments and do not constitute a nonlinear
full-wave error analysis.  Section~\ref{subsec:example7} therefore uses
full-wave data only to test the reconstruction outside its derivation regime,
not to verify a theorem beyond the Born model.

Write
\[
\hat x(\theta)=(\cos\theta,\sin\theta),
\qquad
d(\varphi)=(\cos\varphi,\sin\varphi),
\]
and define the two-angle Fourier coefficients
\begin{align}
a_{mn}(k)
&=
\frac{1}{(2\pi)^2}
\int_0^{2\pi}\int_0^{2\pi}
u^\infty(\theta,\varphi;k)
e^{-\ii m\theta}e^{-\ii n\varphi}
\,\dd\theta\,\dd\varphi.
\label{eq:amn}
\end{align}
Here \(m\) is the Fourier order in the observation angle \(\theta\), whereas
\(n\) is the Fourier order in the incident angle \(\varphi\).  These are
Fourier indices, not physical angles.  In particular, \(a_{m0}\) is the
\(m\)-th observation-angle Fourier coefficient after averaging over the
incident angle; it is not data from one fixed incident direction.

The combinations \(m+n\) and \(m-n\) have a useful interpretation.  If
\(\alpha=(\theta+\varphi)/2\) denotes a common rotation and
\(\beta=(\theta-\varphi)/2\) the relative angle, then the Fourier phase can
be written as
\begin{align}
m\theta+n\varphi
&=(m+n)\alpha+(m-n)\beta.
\label{eq:common_relative_indices}
\end{align}
Thus the family \(m+n=p\) is an anti-diagonal in Fourier-index space with a
fixed common-rotation order; it does not impose a relation on the measured
angles themselves.  The special family \(a_{m,-m}\) has zero common-rotation order and selects
a relative-angle harmonic.  These two Fourier families lead to the two moment
reductions developed below.  In the discrete setting we retain orders strictly
below the angular Nyquist index, so that the Fourier coordinates entering the
covariance calculations remain distinct modulo the sampling size.

\begin{proposition}[Bessel--Fourier identity]
\label{prop:bessel-fourier}
Under the Born approximation,
\begin{align}
a_{mn}(k)
&=
C_k(-\ii)^m\ii^n
\int_D
q(r,\psi)
J_m(kr)J_n(kr)
e^{-\ii(m+n)\psi}
r\,\dd r\,\dd\psi.
\label{eq:bessel_fourier}
\end{align}
\end{proposition}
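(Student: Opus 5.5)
The plan is to expand both plane-wave factors in the Born far field \eqref{eq:born_farfield} by the Jacobi--Anger expansion in polar coordinates, and then read off the two Fourier coefficients term by term. Write \(y=r(\cos\psi,\sin\psi)\), so that
\[
\hat x(\theta)\cdot y=r\cos(\theta-\psi),
\qquad
d(\varphi)\cdot y=r\cos(\varphi-\psi).
\]
The Born kernel then factors as
\[
e^{-\ii k(\hat x-d)\cdot y}=e^{-\ii kr\cos(\theta-\psi)}\,e^{\ii kr\cos(\varphi-\psi)}.
\]
This separates the observation angle from the incident angle, and the double integral in \eqref{eq:amn} splits into a product of two one-dimensional Fourier integrals inside the \(y\)-integral.

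First I would insert \eqref{eq:born_farfield} into \eqref{eq:amn} and move the \(\theta,\varphi\) integrations inside the \(y\)-integral. This is justified by Fubini, since \(q\in L^\infty\) has compact support, the kernel has modulus one, and the domain \(D\times[0,2\pi]^2\) has finite measure. Next I would use the Jacobi--Anger expansion
\[
e^{\ii z\cos t}=\sum_{l\in\Z}\ii^lJ_l(z)e^{\ii lt}.
\]
Together with \(J_l(-z)=(-1)^lJ_l(z)\), this gives
\[
e^{-\ii z\cos t}=\sum_{l\in\Z}(-\ii)^lJ_l(z)e^{\ii lt}.
\]
Both series converge absolutely and uniformly in \(t\) for \(z\) in a bounded set, because \(|J_l(z)|\le (|z|/2)^{|l|}/|l|!\). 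By orthogonality of \(e^{\ii lt}\) on \([0,2\pi]\), the integral \(\frac{1}{2\pi}\int_0^{2\pi}e^{-\ii kr\cos(\theta-\psi)}e^{-\ii m\theta}\,\dd\theta\) retains only the term \(l=m\), which equals \((-\ii)^mJ_m(kr)e^{-\ii m\psi}\). Likewise, \(\frac{1}{2\pi}\int_0^{2\pi}e^{\ii kr\cos(\varphi-\psi)}e^{-\ii n\varphi}\,\dd\varphi=\ii^nJ_n(kr)e^{-\ii n\psi}\).

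Multiplying these two factors and writing \(\dd y=r\,\dd r\,\dd\psi\) gives \eqref{eq:bessel_fourier}, with the constant \(C_k\) carried through unchanged. The phases combine to \(e^{-\ii(m+n)\psi}\), which is the source of the total-order structure noted after \eqref{eq:common_relative_indices}.

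There is no serious obstacle. The only points needing care are the sign conventions, namely that the observation factor carries \(-\ii\) and the incident factor carries \(+\ii\), and the justification for interchanging summation and integration. The latter follows from uniform convergence of the Jacobi--Anger series on the compact range \(kr\le k\sup_{y\in D}|y|\).
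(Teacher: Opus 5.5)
Your proposal is correct and follows the paper's proof: expand both plane-wave factors by Jacobi--Anger, substitute into the Born far field, and let orthogonality in \(\theta\) and \(\varphi\) select the indices \(m\) and \(n\). Your justifications via Fubini and uniform convergence of the Jacobi--Anger series fill in details that the paper leaves implicit.
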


\begin{proof}
The Jacobi--Anger formulas give
\begin{align*}
&e^{-\ii kr\cos(\theta-\psi)}=\sum_{\ell\in\Z}(-\ii)^\ell J_\ell(kr)e^{\ii\ell(\theta-\psi)},\\
&e^{\ii kr\cos(\varphi-\psi)}=\sum_{s\in\Z}\ii^s J_s(kr)e^{\ii s(\varphi-\psi)}.
\end{align*}
Substituting both expansions into \eqref{eq:born_farfield} and using
orthogonality in \(\theta\) and \(\varphi\) selects the indices \(m\) and
\(n\), which yields \eqref{eq:bessel_fourier}.
\end{proof}

Equation \eqref{eq:bessel_fourier} is the basic link between the acoustic
data and the algebraic reconstructions below.  The sum \(m+n\) controls the
angular phase of the contrast, whereas the individual values of \(m\) and
\(n\) determine the radial Bessel kernel.  Fixing \(m+n=p\) therefore leaves
the leading phase-center moment unchanged and creates the multichannel
redundancy used in Section~\ref{sec:multi}.  Setting \(m+n=0\), on the other
hand, removes the angular phase altogether and exposes a complementary radial
moment sequence.

\subsection{Localized components and a single Fourier row}
\label{subsec:single-row}

Assume that the effective contrast is concentrated in \(\Nc\) separated
localized components.  Let
\[
z_j=(x_j, y_j)=r_j(\cos\psi_j,\sin\psi_j),
\qquad
j=1,\ldots,\Nc,
\]
be their phase centers and set
\begin{align}
Q_j=\int_{D_j}q(y)\,\dd y,
\qquad
\lambda_j=r_je^{-\ii\psi_j}=x_j-\ii y_j.
\label{eq:weights_nodes}
\end{align}
The phase-center reduction replaces each localized component by the weighted
point mass \(Q_j\delta_{z_j}\).  Under this approximation,
\begin{align}
a_{mn}^{\rm pc}(k)=
C_k(-\ii)^m\ii^n\sum_{j=1}^{\Nc}Q_jJ_m(kr_j)J_n(kr_j)e^{-\ii(m+n)\psi_j}.
\label{eq:phase_center_coeff}
\end{align}
The finite-size Born coefficient need not equal
\eqref{eq:phase_center_coeff}; their difference is treated as a deterministic
model residual.  The exact rank statement below concerns the reduced
phase-center moments rather than the full finite-size scatterer.

For fixed nonnegative orders, the Bessel product has the expansion
\begin{align}
J_m(t)J_n(t)=\frac{(t/2)^{m+n}}{m!\,n!}
\left[1-\frac{t^2}{4}
\left(\frac{1}{m+1}+\frac{1}{n+1}\right)+O(t^4)
\right].
\label{eq:bessel_product}
\end{align}
When only finitely many orders are retained, the remainder is uniform over
the relevant pairs \((m,n)\).

We first take \(n=0\) and write \(p=m\).  Define
\begin{align}
&c_p^{\rm row}(k)=C_k(-\ii)^p\frac{(k/2)^p}{p!},
\label{eq:row_coefficient}
\\
&M_p=\sum_{j=1}^{\Nc}Q_j\lambda_j^p,
\label{eq:leading_moment}
\\
&R_p=\sum_{j=1}^{\Nc}Q_jr_j^2\lambda_j^p.
\label{eq:radial_moment}
\end{align}
Then \eqref{eq:bessel_product} gives
\begin{align}
\frac{a_{p0}^{\rm pc}(k)}
{c_p^{\rm row}(k)}=M_p-\frac{k^2}{4}
\left(1+\frac{1}{p+1}\right)R_p+O\left(k^4\sum_{j=1}^{\Nc}|Q_j|r_j^{p+4}\right).
\label{eq:row_expansion}
\end{align}

The first term in \eqref{eq:row_expansion} carries the component
information.  It is a finite exponential sequence in the complex phase nodes.
The second term is the leading finite-frequency size correction to the
phase-center moment.  Section~\ref{subsec:multifrequency} retains this term
when several channels and frequencies are combined.

For \(J\geq1\), form the Hankel matrix
\begin{align}
H_J(M)=\big(M_{r+s}\big)_{r, s=0}^{J-1}.
\label{eq:hankel}
\end{align}
In the exact phase-center model, the rank of this matrix follows directly
from a Vandermonde factorization and does not depend on a numerical threshold.

\begin{theorem}[Component count from the row moments]
\label{thm:single_rank}
Assume that the nodes \(\lambda_j\) are pairwise distinct, the weights
\(Q_j\) are nonzero, and \(J\geq\Nc\).  Then
\begin{align}
\rank H_J(M)=\Nc.
\label{eq:single_rank}
\end{align}
\end{theorem}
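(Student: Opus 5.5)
The plan is to factor the Hankel matrix through a Vandermonde matrix and read the rank off the factors. Set \(V\in\mathbb{C}^{J\times\Nc}\) with entries \(V_{rj}=\lambda_j^{r}\), \(r=0,\ldots,J-1\), \(j=1,\ldots,\Nc\), and \(D=\diag(Q_1,\ldots,Q_{\Nc})\). By the definition \eqref{eq:leading_moment}, the \((r,s)\) entry of \(H_J(M)\) is
\begin{align*}
M_{r+s}=\sum_{j=1}^{\Nc}Q_j\lambda_j^{r}\lambda_j^{s}=(VDV^{\top})_{rs}.
\end{align*}
Hence \(H_J(M)=VDV^{\top}\). This is an exact identity in the reduced phase-center model; no Bessel remainder enters, because \(M_p\) is defined directly as a power sum.

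The upper bound \(\rank H_J(M)\le\Nc\) then follows at once, since the inner dimension of the factorization is \(\Nc\).

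For the lower bound we use that \(J\ge\Nc\) and that the \(\lambda_j\) are pairwise distinct.
\begin{itemize}
\item The leading \(\Nc\times\Nc\) block of \(V\) is a square Vandermonde matrix with determinant \(\prod_{i<j}(\lambda_j-\lambda_i)\neq0\).
\item Therefore \(V\) has full column rank \(\Nc\), and so does \(V^{\top}\) as a map, meaning it is surjective onto \(\mathbb{C}^{\Nc}\).
\item \(D\) is invertible because every \(Q_j\neq0\).
\item Consequently \(DV^{\top}\) maps \(\mathbb{C}^J\) onto \(\mathbb{C}^{\Nc}\).
\item Since \(V\) is injective, the range of \(VDV^{\top}\) equals the range of \(V\), which has dimension \(\Nc\).
\end{itemize}
Equivalently, one can use Sylvester's rank inequality: \(\rank(VDV^{\top})\ge \rank V+\rank(DV^{\top})-\Nc=\Nc\).

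The only step needing care is the lower bound. There one must use both distinctness of the nodes, which gives the nonsingular Vandermonde minor, and nonvanishing weights. Neither hypothesis can be dropped.
\begin{itemize}
\item If two nodes coincide, their columns in \(V\) merge and only the sum of their weights survives. This is exactly the concentric-cavity degeneracy mentioned in the introduction.
\item If a weight vanishes, that component drops out of the moments entirely.
\end{itemize}
Everything else is routine linear algebra.
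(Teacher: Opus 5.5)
Your proof is correct and follows the paper's own proof: factor \(H_J(M)=V_J\diag(Q_1,\ldots,Q_{\Nc})V_J^{T}\), then use full column rank of the Vandermonde factor (distinct nodes, \(J\geq\Nc\)) and nonsingularity of the diagonal weight matrix. Your range and Sylvester-inequality arguments fill in the final linear-algebra step that the paper leaves implicit.
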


\begin{proof}
Let
\[
V_J=\big(\lambda_j^r\big)_{\substack{0\leq r\leq J-1\\1\leq j\leq\Nc}}.
\]
The moment definition gives
\begin{align}
H_J(M)=V_J\diag(Q_1,\ldots,Q_{\Nc})V_J^T.
\label{eq:vandermonde_factorization}
\end{align}
Distinct nodes make \(V_J\) full column rank, and the diagonal weight matrix
is nonsingular.
\end{proof}

Once the count has been determined, the same moment sequence yields the
phase-center locations.  With
\begin{align}
H_0=\big(M_{r+s}\big)_{r, s=0}^{\Nc-1},
\quad
H_1=\big(M_{r+s+1}\big)_{r, s=0}^{\Nc-1},
\label{eq:pencil_pair}
\end{align}
the generalized eigenvalues of
\begin{align}
H_1x=\lambda H_0x
\label{eq:pencil}
\end{align}
are \(\lambda_1,\ldots,\lambda_{\Nc}\).  Hence
\begin{align}
z_j=\big(
\operatorname{Re}\lambda_j, -\operatorname{Im}\lambda_j
\big).
\label{eq:center_from_node}
\end{align}
The Hankel rank and shifted-pencil recovery are classical consequences of a
finite exponential sequence
\cite{Hua1990,Potts2010,Potts2013,Peter2013,Kunis2019,CuytLee2024}.
The scattering-specific step is the Bessel--Fourier reduction that produces
this sequence from the measured far-field coefficients.

For later use, we record a perturbation estimate for the pencil itself.  It
applies to both the complex phase-center nodes and the radial nodes introduced
below.

\begin{proposition}[Perturbation of a Hankel pencil]
\label{prop:pencil_perturbation}
Let
\[
u_p=\sum_{j=1}^{r}\omega_j\zeta_j^p,
\qquad
\omega_j\neq 0,
\qquad
\zeta_i\neq\zeta_j\quad(i\neq j),
\]
and form
\[
H_0=(u_{a+b})_{a, b=0}^{r-1},
\qquad
H_1=(u_{a+b+1})_{a, b=0}^{r-1}.
\]
Let
\[
V=(\zeta_j^a)_{\substack{0\leq a\leq r-1\\1\leq j\leq r}},
\qquad
A=H_0^{-1}H_1
=V^{-T}\operatorname{diag}(\zeta_1,\ldots,\zeta_r)V^T.
\]
For \(\widehat H_0=H_0+E_0\) and \(\widehat H_1=H_1+E_1\), assume
\[
\eta_0=\|H_0^{-1}E_0\|_2<1.
\]
Then \(\widehat H_0\) is invertible and, with
\[
\Delta_A=\frac{\|H_0^{-1}\|_2}{1-\eta_0}\left(\|E_1\|_2+\|E_0\|_2\|A\|_2\right),
\]
every generalized eigenvalue \(\widehat\zeta\) of the perturbed pencil
satisfies
\begin{align}
\min_j|\widehat\zeta-\zeta_j|\leq\kappa_2(V)\Delta_A.
\label{eq:pencil_node_bound}
\end{align}
If the right-hand side is smaller than one half of the minimum node
separation, the perturbed nodes can be labeled so that each exact node is
matched with exactly one perturbed node inside its corresponding disk.
\end{proposition}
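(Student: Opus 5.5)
Since $H_0=V\,\Omega\,V^T$ with $\Omega=\diag(\omega_1,\ldots,\omega_r)$ and $H_1=V\,\Omega Z\,V^T$ with $Z=\diag(\zeta_1,\ldots,\zeta_r)$, the plan is to reduce the generalized eigenproblem to a standard eigenvalue problem for a perturbation of $A$. The Bauer--Fike theorem then applies, with the diagonalizing matrix $V^{-T}$. Both factorizations follow as in Theorem~\ref{thm:single_rank}. Because $V$ is a square Vandermonde matrix with distinct nodes and $\Omega$ is nonsingular, $H_0$ is invertible. This gives $A=H_0^{-1}H_1=V^{-T}\Omega^{-1}V^{-1}V\Omega Z V^T=V^{-T}ZV^T$, which is the stated diagonalization.

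First, invertibility of $\widehat H_0$. Write $\widehat H_0=H_0(I+H_0^{-1}E_0)$. Since $\eta_0<1$, a Neumann series shows $I+H_0^{-1}E_0$ is invertible with $\|(I+H_0^{-1}E_0)^{-1}\|_2\le 1/(1-\eta_0)$. Hence $\|\widehat H_0^{-1}\|_2\le \|H_0^{-1}\|_2/(1-\eta_0)$. The generalized eigenvalues of $(\widehat H_1,\widehat H_0)$ are then exactly the eigenvalues of $\widehat A=\widehat H_0^{-1}\widehat H_1$.

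Next, I bound $\widehat A-A$ using the identity
\[
\widehat A-A=\widehat H_0^{-1}\bigl(\widehat H_1-\widehat H_0A\bigr)=\widehat H_0^{-1}\bigl(E_1-E_0A\bigr),
\]
which holds because $H_1=H_0A$. Taking norms gives $\|\widehat A-A\|_2\le \|\widehat H_0^{-1}\|_2(\|E_1\|_2+\|E_0\|_2\|A\|_2)\le\Delta_A$. Bauer--Fike applied to $A=SZS^{-1}$ with $S=V^{-T}$ gives, for every eigenvalue $\widehat\zeta$ of $\widehat A$, $\min_j|\widehat\zeta-\zeta_j|\le\kappa_2(S)\|\widehat A-A\|_2$. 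Since $\kappa_2(V^{-T})=\kappa_2(V)$, this yields \eqref{eq:pencil_node_bound}.

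The main obstacle is the labeling statement, because Bauer--Fike alone only places every perturbed eigenvalue in the union of the disks; it does not say how many fall in each disk. Set $\rho=\kappa_2(V)\Delta_A$ and assume $\rho<\tfrac12\min_{i\neq j}|\zeta_i-\zeta_j|$, so the closed disks $B(\zeta_j,\rho)$ are pairwise disjoint. I would use a continuity (homotopy) argument on $A(t)=A+t(\widehat A-A)$ for $t\in[0,1]$. Each $A(t)$ satisfies the same Bauer--Fike bound with $t\|\widehat A-A\|_2\le\Delta_A$, so every eigenvalue of $A(t)$ stays in the union of the disjoint disks for all $t$. The eigenvalues depend continuously on $t$ as roots of the characteristic polynomial, and for each $j$ the number of them counted with multiplicity in $B(\zeta_j,\rho)$ is given by an argument-principle integral over a circle slightly larger than $\rho$ that avoids the other disks. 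That count is therefore constant in $t$, and at $t=0$ it equals one. Consequently each disk contains exactly one perturbed node, which gives the one-to-one matching.
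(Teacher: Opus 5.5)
Your proof is correct and follows essentially the paper's argument: the Neumann bound on $\widehat H_0^{-1}$, the identity $\widehat A-A=\widehat H_0^{-1}(E_1-E_0A)$, Bauer--Fike with $\kappa_2(V^{-T})=\kappa_2(V)$, and a homotopy/continuity count for the one-to-one matching. The only minor difference is the homotopy path: you interpolate $A(t)=A+t(\widehat A-A)$ directly, so the Bauer--Fike radius scales with $t$ without further work, whereas the paper interpolates the pencils $H_j+tE_j$ and relies on the same Neumann bound holding uniformly in $t$.
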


\begin{proof}
The identity
\[
\widehat A-A=(H_0+E_0)^{-1}(E_1-E_0A)
\]
and the Neumann-series bound give
\[
\|\widehat A-A\|_2\leq\Delta_A.
\]
Since \(A=V^{-T}\operatorname{diag}(\zeta_j)V^T\), the Bauer--Fike theorem
implies \eqref{eq:pencil_node_bound}.  For the last statement, connect the
exact and perturbed pencils by the homotopy
\(H_j(t)=H_j+tE_j\), \(0\leq t\leq1\).  The same Neumann bound keeps
\(H_0(t)\) invertible, and the perturbation disks remain disjoint along the
homotopy.  Continuity of the eigenvalue multiset therefore preserves one
eigenvalue in each disk.
\end{proof}

\subsection{Signed moments and detectable cavities}
\label{subsec:signed}

When cavities are present, the row moments acquire a signed interpretation.
This separates physical presence of a cavity from detectability as a distinct
phase node in the leading moment model.

Suppose
\begin{align}
D=\left(\bigcup_{j=1}^{\Nc}G_j\right)
\setminus
\left(\bigcup_{\ell=1}^{\Nh}B_\ell\right),
\label{eq:signed_geometry}
\end{align}
where \(G_j\) are separated filled material components and each cavity
\(B_\ell\) is compactly contained in one of them.  Let \(q^+\) be the
filled-material contrast and define
\begin{align}
Q_j^+=\int_{G_j}q^+(y)\,\dd y,
\quad
Q_\ell^-=\int_{B_\ell}q^+(y)\,\dd y.
\label{eq:signed_strengths}
\end{align}
With material nodes \(\lambda_j^+\) and cavity nodes \(\lambda_\ell^-\), the
leading signed moments are
\begin{align}
M_p^{\rm sgn}=\sum_{j=1}^{\Nc}Q_j^+(\lambda_j^+)^p-\sum_{\ell=1}^{\Nh}Q_\ell^-(\lambda_\ell^-)^p.
\label{eq:signed_moments}
\end{align}

If a material and cavity node coincide, their contributions combine before
the Hankel matrix is formed.  It is therefore convenient to merge coincident
nodes and delete any term with zero effective weight.  Let the remaining
distinct nodes be
\(\xi_1,\ldots,\xi_{\Ns}\), with nonzero signed weights
\(\gamma_1,\ldots,\gamma_{\Ns}\).  Then
\begin{align}
M_p^{\rm sgn}=\sum_{\nu=1}^{\Ns}\gamma_\nu\xi_\nu^p.
\label{eq:effective_signed}
\end{align}

\begin{theorem}[Signed Hankel rank]
\label{thm:signed_rank}
If \(J\geq\Ns\), the effective nodes are pairwise distinct, and their weights
are nonzero, then
\begin{align}
\rank H_J(M^{\rm sgn})=\Ns.
\label{eq:signed_rank}
\end{align}
If all material and cavity phase nodes are pairwise distinct, then
\(\Ns=\Nc+\Nh\), and therefore
\begin{align}
\Nh=\rank H_J(M^{\rm sgn})-\Nc.
\label{eq:cavity_count}
\end{align}
\end{theorem}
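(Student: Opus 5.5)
The plan is to reduce the statement to Theorem~\ref{thm:single_rank}, which is a purely algebraic fact about finite exponential sequences and never uses the sign of the weights. By \eqref{eq:effective_signed}, the signed moments already have the form
\[
M_p^{\rm sgn}=\sum_{\nu=1}^{\Ns}\gamma_\nu\xi_\nu^p ,
\]
with pairwise distinct nodes \(\xi_\nu\) and nonzero complex weights \(\gamma_\nu\). I would therefore write down the Vandermonde factorization directly:
\[
H_J(M^{\rm sgn})=W_J\,\diag(\gamma_1,\ldots,\gamma_{\Ns})\,W_J^T,
\qquad
W_J=\big(\xi_\nu^r\big)_{\substack{0\leq r\leq J-1\\1\leq\nu\leq\Ns}}.
\]
This is checked entrywise: the \((r,s)\) entry of the right-hand side is \(\sum_\nu\gamma_\nu\xi_\nu^{r+s}=M^{\rm sgn}_{r+s}\).

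Next I bound the rank from both sides.
\begin{itemize}
\item \emph{Upper bound.} The inner dimension of the factorization is \(\Ns\), so \(\rank H_J(M^{\rm sgn})\leq\Ns\).
\item \emph{Lower bound.} Since \(J\geq\Ns\) and the \(\xi_\nu\) are distinct, the top \(\Ns\times\Ns\) block of \(W_J\) is a square Vandermonde matrix with nonzero determinant. Hence \(W_J\) has full column rank \(\Ns\), and so \(W_J^T\) has full row rank \(\Ns\).
\item \emph{Combining.} The diagonal factor is invertible because every \(\gamma_\nu\neq0\). A full-column-rank matrix times an invertible matrix times a full-row-rank matrix has rank exactly \(\Ns\), which gives \eqref{eq:signed_rank}.
\end{itemize}
Alternatively, one can simply cite Theorem~\ref{thm:single_rank} with \(Q_j\) replaced by \(\gamma_\nu\) and \(\lambda_j\) by \(\xi_\nu\). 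Its proof only used nonzero weights and distinct nodes, never positivity or reality of the weights.

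For the second claim, suppose all material nodes \(\lambda_j^+\) and cavity nodes \(\lambda_\ell^-\) are pairwise distinct. Then no merging occurs in passing from \eqref{eq:signed_moments} to \eqref{eq:effective_signed}, so the effective nodes are exactly the \(\Nc+\Nh\) original nodes. Their weights are \(Q_j^+\) and \(-Q_\ell^-\).

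The one point needing care, and the main obstacle I expect, is that these weights are nonzero, since otherwise a term is deleted and \(\Ns<\Nc+\Nh\). I would read this off the standing framework: the construction of the effective sequence explicitly deletes zero-weight terms, so the count \(\Ns=\Nc+\Nh\) implicitly requires \(Q_j^+\neq0\) and \(Q_\ell^-\neq0\). I would state this as part of the hypothesis, i.e.\ nondegenerate strengths, for instance \(q^+\) of one sign and nonvanishing on each \(G_j\) and \(B_\ell\). With nonzero weights, \(\Ns=\Nc+\Nh\), and the first part gives \(\rank H_J(M^{\rm sgn})=\Nc+\Nh\) whenever \(J\geq\Nc+\Nh\). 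Rearranging yields \eqref{eq:cavity_count}.
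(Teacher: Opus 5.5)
Your proof is correct and follows the paper's argument: the paper factors $H_J(M^{\rm sgn})=V_{\rm sgn}\diag(\gamma_1,\ldots,\gamma_{\Ns})V_{\rm sgn}^T$ and concludes exactly as in Theorem~\ref{thm:single_rank}. Your remark that $\Ns=\Nc+\Nh$ also requires $Q_j^+\neq0$ and $Q_\ell^-\neq0$ (not just pairwise distinct nodes) is a valid point, and it makes explicit an assumption the paper leaves implicit.
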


\begin{proof}
The signed sequence \eqref{eq:effective_signed} has the factorization
\[
H_J(M^{\rm sgn})=V_{\rm sgn}\diag(\gamma_1,\ldots,\gamma_{\Ns})V_{\rm sgn}^T,
\]
and the conclusion follows as in Theorem~\ref{thm:single_rank}.
\end{proof}

\begin{remark}[Coincident-node degeneracy]
\label{rem:detectability}
If a material and cavity phase center coincide, their leading signed
contributions merge into one effective node.  The signed phase-center rank
therefore counts detectable cavity phase centers rather than geometric holes
without qualification.  Using more coefficients with the same positive total
angular order reduces noise in the common signed moment, but it cannot create
a second phase node when the two centers coincide.  The next subsection shows
that the missing information is nevertheless present in a different part of
the two-angle Fourier matrix.
\end{remark}

\subsection{Zero-total-order coefficients and radial interfaces}
\label{subsec:radial}

The signed phase-center model loses a concentric cavity because material and
cavity share the same angular phase.  The full far-field matrix nevertheless
contains complementary information, obtained here from the zero-total-order
Fourier coefficients.

Define the angular average of the contrast about the chosen origin by
\begin{align}
q_0(r)=\frac{1}{2\pi}\int_0^{2\pi}q(r,\psi)\,\dd\psi.
\label{eq:radial_average}
\end{align}

\begin{proposition}[Zero-total-order radial identity]
\label{prop:zero_total_radial}
Under the Born approximation, for every nonnegative integer \(m\),
\begin{align}
a_{m,-m}(k)=a_{-m,m}(k)=2\pi C_k\int_0^\infty q_0(r)J_m(kr)^2r\,\dd r.
\label{eq:zero_total_radial}
\end{align}
If the contrast is radially symmetric about the origin, then
\begin{align}
a_{mn}(k)=0
\qquad\text{whenever }m+n\neq0.
\label{eq:radial_positive_total_vanish}
\end{align}
Hence no collection of positive-total-order channels, at one or many
frequencies, can create a second phase node for a perfectly concentric radial
profile.
\end{proposition}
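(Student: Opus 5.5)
The plan is to specialize the Bessel--Fourier identity of Proposition~\ref{prop:bessel-fourier} and then integrate out the angular variable. Setting \(n=-m\) in \eqref{eq:bessel_fourier} gives prefactor \((-\ii)^m\ii^{-m}=(-\ii)^m(-\ii)^m=(-1)^m\). The Bessel kernel is \(J_m(kr)J_{-m}(kr)=(-1)^mJ_m(kr)^2\), by the reflection formula \(J_{-m}=(-1)^mJ_m\). The two signs cancel, and since \(m+n=0\) the phase factor \(e^{-\ii(m+n)\psi}\) disappears. Thus
\[
a_{m,-m}(k)=C_k\int_0^\infty\!\!\int_0^{2\pi}q(r,\psi)J_m(kr)^2\,r\,\dd\psi\,\dd r .
\]
By Fubini's theorem (\(q\) is bounded with compact support, and \(J_m\) is bounded), the \(\psi\)-integral equals \(2\pi q_0(r)\) by \eqref{eq:radial_average}, which yields \eqref{eq:zero_total_radial}.

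For \(a_{-m,m}\) the same computation applies with the roles exchanged. The prefactor is \((-\ii)^{-m}\ii^{m}=\ii^m\ii^m=(-1)^m\), and again the reflection formula produces \(J_m^2\) with the sign cancelled. Hence both coefficients equal the same radial integral. The only point needing care is bookkeeping of the signs \((\pm\ii)^{\pm m}\) and of \(J_{-m}\) for \(m\geq0\). I expect this to be the main, if modest, obstacle; I would check it directly against the Jacobi--Anger conventions used in the proof of Proposition~\ref{prop:bessel-fourier}.

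For the radially symmetric case, write \(q(r,\psi)=q_0(r)\). Then the \(\psi\)-integral in \eqref{eq:bessel_fourier} factors out as \(\int_0^{2\pi}e^{-\ii(m+n)\psi}\,\dd\psi\), which vanishes whenever \(m+n\neq0\). This gives \eqref{eq:radial_positive_total_vanish} for all \(m,n\in\Z\) and every \(k\).

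The final sentence of the proposition is then a corollary of this vanishing. Every positive-total-order coefficient, and hence every channel built from them, at any frequency, is identically zero for the concentric radial part. Such a part therefore contributes only through the common origin node and cannot create a second phase node. I would phrase this for a concentric material/cavity pair: the contrast about the common center is radial, so all of its \(p>0\) data vanish and no channel ensemble distinguishes the cavity. This echoes Remark~\ref{rem:detectability}.
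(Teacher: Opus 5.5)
Your proof is correct and is essentially the paper's argument: set \(n=-m\) in the Bessel--Fourier identity, let the reflection formula \(J_{-m}=(-1)^mJ_m\) cancel the prefactor \((-\ii)^m\ii^{-m}=(-1)^m\) (and likewise \((-\ii)^{-m}\ii^{m}=(-1)^m\) for \(a_{-m,m}\)), average over \(\psi\) to obtain \(2\pi q_0(r)\), and use the vanishing of \(\int_0^{2\pi}e^{-\ii(m+n)\psi}\,\dd\psi\) for \(m+n\neq0\) in the radial case. One minor caution is that a concentric material/cavity pair gives a radial contrast only when both are disks, which is the scope restriction stated in Remark~\ref{rem:radial_scope}.
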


\begin{proof}
Setting \(n=-m\) in Proposition~\ref{prop:bessel-fourier} and using
\(J_{-m}(t)=(-1)^mJ_m(t)\) cancels the Fourier phase and all prefactors,
which gives \eqref{eq:zero_total_radial}.  If \(q=q(r)\), the remaining
angular integral in \eqref{eq:bessel_fourier} is the integral of
\(e^{-\ii(m+n)\psi}\) over a full period and therefore vanishes for
\(m+n\neq0\).
\end{proof}

\begin{remark}[Scope of the radial reduction]
\label{rem:radial_scope}
The coefficients \(a_{m,-m}\) determine radial Bessel moments of the angular
average \(q_0\).  They therefore resolve concentric radial interfaces when
that average has a finite layered representation.  For a general nonradial
cavity with the same phase center as the surrounding material, the present
radial rank describes the interfaces of \(q_0\), not the full cavity shape.
Thus the radial construction removes the concentric-annulus degeneracy without
claiming unconditional recovery of arbitrary same-centered holes.
\end{remark}

Assume that \(q_0\) is piecewise constant and write
\begin{align}
q_0(r)=\sum_{\ell=1}^{L}\beta_\ell\mathbf 1_{[0,R_\ell)}(r),
\qquad 0<R_1<\cdots<R_L.
\label{eq:radial_disk_superposition}
\end{align}
The coefficients \(\beta_\ell\) are the jumps of the radial profile and may
have either sign.  Let \(R_*\geq R_L\) be a reference radius, chosen for
example from the imaging window.  The normalization by \(R_*\) is not needed
for the exact rank, but keeps the radial nodes in \((0,1]\) and improves
numerical conditioning.

Define
\begin{align}
s_m(k)=\frac{(2m+2)(m!)^2}{2\pi C_kR_*^{2m}}\left(\frac{2}{k}\right)^{2m}a_{m,-m}(k),
\label{eq:scaled_radial_coeff}
\end{align}
and
\begin{align}
&\mu_m^{\rm rad}=\sum_{\ell=1}^{L}\beta_\ell R_\ell^2\left(\frac{R_\ell^2}{R_*^2}\right)^m
=\sum_{\ell=1}^{L}\gamma_\ell\xi_\ell^m,
\label{eq:radial_moment_sequence}
\\
&\xi_\ell=\frac{R_\ell^2}{R_*^2},
\qquad \gamma_\ell=\beta_\ell R_\ell^2.
\nonumber
\end{align}
For later error bounds set
\[
A_m^{\rm rad}=\sum_{\ell=1}^{L}|\gamma_\ell|\xi_\ell^m.
\]

\begin{proposition}[Low-frequency radial expansion]
\label{prop:radial_expansion}
Fix a maximum retained order.  As \(kR_*\to0\), uniformly over the retained
orders,
\begin{align}
&s_m(k)=\mu_m^{\rm rad}-\frac{(kR_*)^2}{2(m+2)}\mu_{m+1}^{\rm rad}
\nonumber\\
&+\frac{(2m+3)(kR_*)^4}{16(m+1)(m+2)(m+3)}\mu_{m+2}^{\rm rad}+O\left((kR_*)^6A_{m+3}^{\rm rad}\right).
\label{eq:radial_low_frequency_expansion}
\end{align}
\end{proposition}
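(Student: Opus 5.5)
The plan is a direct term-by-term computation. I would start from Proposition~\ref{prop:zero_total_radial}, substitute the layered profile \eqref{eq:radial_disk_superposition}, and then insert the absolutely convergent power series of \(J_m^2\). The series I will use is the classical Cauchy-product formula
\[
J_m(t)^2=\sum_{j\geq0}\frac{(-1)^j(2m+2j)!}{j!\,(2m+j)!\,((m+j)!)^2}\Big(\frac t2\Big)^{2m+2j},
\]
which I take as known. It can also be checked from the Cauchy product of the two series for \(J_m\).

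\textbf{Step 1: reduce to disk integrals.} With \(q_0=\sum_\ell\beta_\ell\mathbf 1_{[0,R_\ell)}\), the identity \eqref{eq:zero_total_radial} gives
\[
a_{m,-m}(k)=2\pi C_k\sum_\ell\beta_\ell\int_0^{R_\ell}J_m(kr)^2r\,\dd r .
\]
Because the series converges uniformly on compact sets, I integrate it termwise using
\[
\int_0^{R}r^{2m+2j+1}\,\dd r=\frac{R^{2m+2j+2}}{2m+2j+2}.
\]

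\textbf{Step 2: apply the scaling.} Multiplying by the normalization in \eqref{eq:scaled_radial_coeff} gives
\[
s_m(k)=\sum_{j\geq0}d_{m,j}\,(kR_*)^{2j}\,\mu^{\rm rad}_{m+j},
\qquad
d_{m,j}=\frac{(-1)^j(2m+2)(m!)^2(2m+2j)!}{4^j\,j!\,(2m+j)!\,((m+j)!)^2\,(2m+2j+2)}.
\]
The key bookkeeping is that each monomial \(\beta_\ell R_\ell^{2m+2j+2}R_*^{-2m}\) equals \(R_*^{2j}\gamma_\ell\xi_\ell^{m+j}\). Summing over \(\ell\) therefore produces exactly \(R_*^{2j}\mu^{\rm rad}_{m+j}\).

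\textbf{Step 3: check the first three coefficients.} I verify them one at a time.
\begin{itemize}
\item For \(j=0\), the coefficient is \(d_{m,0}=1\).
\item For \(j=1\), simplify with \((2m+2)!/(2m+1)!=2m+2\) and \((m+1)!=(m+1)\,m!\). This gives
\[
d_{m,1}=-\frac{(2m+2)^2}{4(m+1)^2(2m+4)}=-\frac{1}{2(m+2)} .
\]
\item For \(j=2\), the same kind of factorial cancellation should give \(d_{m,2}=\frac{2m+3}{16(m+1)(m+2)(m+3)}\). I will confirm this by explicit cancellation. It is the one step where an arithmetic slip is most likely, so I would double-check it at \(m=0\), where \(J_0^2=1-t^2/2+3t^4/32-\cdots\) gives \(d_{0,2}=3/96=1/32\), matching \(3/(16\cdot6)\).
\end{itemize}

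\textbf{Step 4: bound the remainder, the main obstacle.} The remainder is \(\sum_{j\geq3}d_{m,j}(kR_*)^{2j}\mu^{\rm rad}_{m+j}\). I would argue in three moves.
\begin{itemize}
\item Since \(0<\xi_\ell\leq1\), we have \(|\mu^{\rm rad}_{m+j}|\leq A^{\rm rad}_{m+j}\leq A^{\rm rad}_{m+3}\) for \(j\geq3\).
\item The coefficients \(|d_{m,j}|\) are bounded by \(C_m/j!\); the ratio of factorials is at most of order \(4^j\), which cancels against the \(4^{-j}\).
\item Hence the tail is at most \(C_m(kR_*)^6A^{\rm rad}_{m+3}\sum_{j\geq3}(kR_*)^{2j-6}/j!\), which is \(O((kR_*)^6A^{\rm rad}_{m+3})\) for \(kR_*\leq1\).
\end{itemize}
Uniformity follows because only finitely many orders \(m\) are retained, so one takes \(\max_m C_m\).
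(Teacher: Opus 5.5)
Your proposal is correct and follows essentially the paper's route: insert the small-argument series of \(J_m^2\) into the zero-total-order identity, integrate each disk term, and apply the normalization defining \(s_m\). Your coefficients \(d_{m,0}=1\), \(d_{m,1}=-1/(2(m+2))\) and \(d_{m,2}=(2m+3)/(16(m+1)(m+2)(m+3))\) are all correct, and the only difference is that you bound the tail of the full Cauchy-product series explicitly via \(|d_{m,j}|\le C_m/j!\) and \(\xi_\ell\le 1\), whereas the paper quotes a truncated expansion with an \(O(t^6)\) remainder and appeals to \(R_\ell\le R_*\) and finitely many orders for uniformity.
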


\begin{proof}
The Bessel series gives
\[
J_m(t)^2=\frac{(t/2)^{2m}}{(m!)^2}\left[1-\frac{t^2}{2(m+1)}+\frac{(2m+3)t^4}{16(m+1)^2(m+2)}+O(t^6)\right].
\]
Insert this expression into \eqref{eq:zero_total_radial}, integrate each disk
term in \eqref{eq:radial_disk_superposition}, and apply the normalization in
\eqref{eq:scaled_radial_coeff}.  The remainder is uniform because only
finitely many Bessel orders are retained and \(R_\ell\leq R_*\).
\end{proof}

The zero-frequency limit is a finite exponential sequence in normalized
squared radii.

\begin{theorem}[Radial interface rank and recovery]
\label{thm:radial_rank}
Assume that the radii in \eqref{eq:radial_disk_superposition} are distinct,
all \(\gamma_\ell\) are nonzero, and \(J\geq L\).  The radial Hankel matrix
\begin{align}
H_J^{\rm rad}=\big(\mu_{r+s}^{\rm rad}\big)_{r, s=0}^{J-1}
\label{eq:radial_hankel}
\end{align}
satisfies
\begin{align}
\rank H_J^{\rm rad}=L.
\label{eq:radial_rank}
\end{align}
For the square \(L\times L\) shifted pencil, the generalized eigenvalues are
\(\xi_1,\ldots,\xi_L\), and
\begin{align}
R_\ell=R_*\sqrt{\xi_\ell}.
\label{eq:radial_radius_from_node}
\end{align}
\end{theorem}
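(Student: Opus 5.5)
The plan is to reduce the statement to the same Vandermonde factorization used in Theorem~\ref{thm:single_rank} and Theorem~\ref{thm:signed_rank}, now applied to the radial nodes \(\xi_\ell=R_\ell^2/R_*^2\). First I will check that \(\mu_m^{\rm rad}\) in \eqref{eq:radial_moment_sequence} is a finite exponential sequence of exactly the type treated earlier. The nodes \(\xi_\ell\) are pairwise distinct because the radii are distinct and positive, so \(R\mapsto R^2/R_*^2\) is injective on \((0,\infty)\). The weights \(\gamma_\ell=\beta_\ell R_\ell^2\) are nonzero by assumption.

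With \(V_J^{\rm rad}=(\xi_\ell^r)_{0\le r\le J-1,\,1\le \ell\le L}\), a direct computation gives
\[
H_J^{\rm rad}=V_J^{\rm rad}\diag(\gamma_1,\ldots,\gamma_L)\big(V_J^{\rm rad}\big)^T ,
\]
since the \((r,s)\) entry is \(\sum_\ell \gamma_\ell\xi_\ell^{r}\xi_\ell^{s}=\mu_{r+s}^{\rm rad}\). Because \(J\ge L\) and the nodes are distinct, the top \(L\times L\) block of \(V_J^{\rm rad}\) is a nonsingular Vandermonde matrix, so \(V_J^{\rm rad}\) has full column rank \(L\). The diagonal factor is invertible. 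A product \(ADA^T\) with \(A\) of full column rank and \(D\) nonsingular has rank equal to the number of columns of \(A\), because \(A\) is injective and \(A^T\) is surjective onto \(\mathbb{C}^L\). This gives \eqref{eq:radial_rank}.

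For the pencil, I will take \(H_0=(\mu_{a+b}^{\rm rad})_{a,b=0}^{L-1}\) and \(H_1=(\mu_{a+b+1}^{\rm rad})_{a,b=0}^{L-1}\), and write \(V\) for the square Vandermonde matrix and \(\Gamma=\diag(\gamma_\ell)\), \(\Xi=\diag(\xi_\ell)\). Then \(H_0=V\Gamma V^T\) and \(H_1=V\Gamma\Xi V^T\). Since \(V\) and \(\Gamma\) are invertible, \(H_0\) is invertible and
\[
H_0^{-1}H_1=V^{-T}\Xi V^T ,
\]
which is exactly the matrix \(A\) of Proposition~\ref{prop:pencil_perturbation}. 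Hence the generalized eigenvalues of \(H_1x=\lambda H_0x\) are \(\xi_1,\ldots,\xi_L\). Each \(\xi_\ell>0\), so taking the positive square root gives \eqref{eq:radial_radius_from_node}, and the \(R_\ell\) are recovered uniquely.

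There is no serious obstacle. The only point needing care is to confirm that distinct radii give distinct nodes and that \(\gamma_\ell\ne0\) is exactly the nonvanishing-weight hypothesis. Cancellation cannot arise here, because each radius appears once in \eqref{eq:radial_disk_superposition}. This contrasts with the coincident-node merging in Remark~\ref{rem:detectability}.
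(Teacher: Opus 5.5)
Your proposal is correct and follows the paper's proof: the Vandermonde factorization \(H_J^{\rm rad}=V_J^{\rm rad}\diag(\gamma_1,\ldots,\gamma_L)(V_J^{\rm rad})^T\) gives the rank, and the shifted factorization \(H_1=V\Gamma\Xi V^T\) gives the pencil eigenvalues. You also make explicit steps the paper leaves implicit, namely that distinct radii give distinct nodes, why \(ADA^T\) has rank \(L\), and the form \(H_0^{-1}H_1=V^{-T}\Xi V^T\).
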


\begin{proof}
With \(V_J^{\rm rad}=(\xi_\ell^r)\),
\begin{align}
H_J^{\rm rad}=V_J^{\rm rad}\diag(\gamma_1,\ldots,\gamma_L)(V_J^{\rm rad})^T.
\end{align}
The Vandermonde matrix has full column rank.  The shifted factorization adds
one power of each \(\xi_\ell\), so the generalized eigenvalues are exactly
the normalized squared radii.
\end{proof}

For a filled disk there is one radial node.  For a concentric annulus with
inner radius \(\rho\), outer radius \(R\), and constant contrast \(q_0\),
\begin{align}
\mu_m^{\rm rad}=q_0R^2\left(\frac{R^2}{R_*^2}\right)^m-q_0\rho^2\left(\frac{\rho^2}{R_*^2}\right)^m,
\label{eq:annulus_radial_sequence}
\end{align}
so the radial rank is two although the phase-center rank is one.

The construction can also be applied after recentering a component.  If its
phase center is \(z_0\), then under the Born model
\begin{align}
\widetilde u^\infty(\hat x,d;k)=e^{\ii k(\hat x-d)\cdot z_0}u^\infty(\hat x,d;k)
\label{eq:recenter_farfield}
\end{align}
is the far field of the translated contrast.  If \(z_0\) is replaced by
\(\widehat z_0\), then
\[
\left|e^{\ii k(\hat x-d)\cdot\widehat z_0}-e^{\ii k(\hat x-d)\cdot z_0}\right|\leq2k|\widehat z_0-z_0|,
\]
so centering error enters the radial moments as an additional deterministic
perturbation.
\section{Multichannel estimation and stability}
\label{sec:multi}

The preceding section identifies two finite moment structures in the same
two-angle data.  We now study how measurement noise and finite-frequency bias
enter their empirical Hankel matrices.  The phase-center sequence benefits
from redundancy along fixed-total-order anti-diagonals.  The radial sequence
uses the complementary zero-total-order family and a low-frequency
extrapolation.

\subsection{Noise transferred by a single channel}
\label{subsec:row-noise}

Suppose the far field at a fixed frequency is sampled on the uniform grid
\[
\theta_r=\frac{2\pi r}{\Nang},
\qquad
\varphi_s=\frac{2\pi s}{\Nang}.
\]
The discrete coefficient is
\begin{align}
a_{mn}^{(\Nang)}=\frac{1}{\Nang^2}\sum_{r, s=0}^{\Nang-1}U_{r s}^{\infty}e^{-2\pi\ii mr/\Nang}e^{-2\pi\ii ns/\Nang}.
\label{eq:discrete_fourier}
\end{align}
We perturb the full far-field matrix according to
\begin{align}
U_\delta^\infty=U^\infty+\delta\|U^\infty\|_F\frac{W}{\|W\|_F},
\label{eq:noise_model}
\end{align}
where the entries of \(W\) are independent circular complex Gaussian
variables.  This is a relative Frobenius perturbation of the measured
far-field matrix rather than a perturbation added directly to the moments.

\begin{proposition}[Covariance of the Fourier coefficients]
\label{prop:fourier_noise}
Let \(\eta_{mn}\) denote the perturbation of
\eqref{eq:discrete_fourier} induced by \eqref{eq:noise_model}.  Then
\begin{align}
&\mathbb E\eta_{mn}=0,
\\
&\mathbb E\left[\eta_{mn}\overline{\eta_{m'n'}}\right]=\sigma_k^2\delta_{mm'}\delta_{nn'},
\label{eq:fourier_cov}
\end{align}
where
\begin{align}
\sigma_k^2=\frac{\delta^2\|U^\infty\|_F^2}{\Nang^4}.
\label{eq:sigma_k}
\end{align}
\end{proposition}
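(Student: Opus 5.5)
The plan is to reduce the statement to second-moment properties of the normalized noise matrix \(W/\|W\|_F\), and then push those through the discrete Fourier transform \eqref{eq:discrete_fourier}, which is linear. Write \(E=\delta\|U^\infty\|_F\,W/\|W\|_F\) for the perturbation in \eqref{eq:noise_model}. Then
\[
\eta_{mn}=\frac{1}{\Nang^2}\sum_{r,s=0}^{\Nang-1}E_{rs}\,e^{-2\pi\ii mr/\Nang}e^{-2\pi\ii ns/\Nang}.
\]
Here \(\|U^\infty\|_F\) is the noiseless, deterministic norm, so the only randomness is in the direction \(W/\|W\|_F\).

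First, the mean. The law of \(W\) is invariant under \(W\mapsto -W\), and more generally under \(W\mapsto e^{\ii\phi}W\) by circularity. The normalization \(\|W\|_F\) is unchanged by this sign flip. Hence \(E\) and \(-E\) have the same distribution, and since \(|E_{rs}|\leq\delta\|U^\infty\|_F\) all expectations exist, so \(\mathbb E E_{rs}=0\). Linearity then gives \(\mathbb E\eta_{mn}=0\).

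Second, the covariance of the entries, which I expect to be the only step needing care. The normalization destroys independence, so the entries of \(E\) are no longer independent and one cannot simply quote the Gaussian covariance. I will argue via symmetry. Regard \(W\) as a vector in \(\mathbb C^{N}\) with \(N=\Nang^2\). Its law is invariant under all unitary maps of \(\mathbb C^N\), so \(w=W/\|W\|_F\) is uniformly distributed on the unit sphere.

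1. Off-diagonal terms. For \((r,s)\neq(r',s')\), multiplying the single coordinate \((r,s)\) by \(-1\) is unitary. This shows \(\mathbb E[w_{rs}\overline{w_{r's'}}]=0\).
2. Diagonal terms. Permutation invariance shows that all \(\mathbb E|w_{rs}|^2\) are equal. Since \(\sum_{r,s}|w_{rs}|^2=1\) almost surely, each equals \(1/\Nang^2\).
3. Conclusion. Combining the two cases,
\[
\mathbb E\bigl[E_{rs}\overline{E_{r's'}}\bigr]=\frac{\delta^2\|U^\infty\|_F^2}{\Nang^2}\,\delta_{rr'}\delta_{ss'}.
\]

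Third, transfer this covariance to the Fourier coefficients. Substituting into \(\mathbb E[\eta_{mn}\overline{\eta_{m'n'}}]\) collapses the double sum to a single sum:
\[
\mathbb E\bigl[\eta_{mn}\overline{\eta_{m'n'}}\bigr]=\frac{1}{\Nang^4}\cdot\frac{\delta^2\|U^\infty\|_F^2}{\Nang^2}\sum_{r=0}^{\Nang-1}e^{-2\pi\ii(m-m')r/\Nang}\sum_{s=0}^{\Nang-1}e^{-2\pi\ii(n-n')s/\Nang}.
\]
Each geometric sum equals \(\Nang\) when the index difference is divisible by \(\Nang\), and \(0\) otherwise. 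The paper's convention is to retain orders strictly below the angular Nyquist index, so distinct retained orders remain distinct modulo \(\Nang\). The sums therefore produce exactly \(\Nang^2\delta_{mm'}\delta_{nn'}\), which yields \(\sigma_k^2=\delta^2\|U^\infty\|_F^2/\Nang^4\) as in \eqref{eq:sigma_k}.
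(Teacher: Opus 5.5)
Your proof is correct and follows the same route as the paper: isotropy of the normalized Gaussian vector gives the entrywise covariance \(\delta^2\|U^\infty\|_F^2\Nang^{-2}I\), which is then carried through the two-dimensional DFT (unitary up to the scale \(\Nang\)) together with the \(1/\Nang^2\) normalization in \eqref{eq:discrete_fourier}. Where the paper simply quotes rotational invariance and unitarity, you derive them: the zero mean and vanishing off-diagonal terms by sign flips, the equal diagonal entries by permutations, and the final collapse by explicit geometric sums using the sub-Nyquist distinctness of the retained orders.
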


\begin{proof}
After vectorization, rotational invariance of the normalized Gaussian vector
gives
\[
\mathbb E\frac{\operatorname{vec}(W)\operatorname{vec}(W)^*}{\|W\|_F^2}
=\frac{1}{\Nang^2}I.
\]
The two-dimensional discrete Fourier transform is unitary up to the
deterministic scale \(\Nang\), while
\eqref{eq:discrete_fourier} contributes \(1/\Nang^2\).  This yields
\eqref{eq:fourier_cov}.
\end{proof}

The empirical row moment is
\begin{align}
\widehat M_p^{\rm row}=\frac{a_{p0}^{(\Nang)}}{c_p^{\rm row}(k)}.
\label{eq:row_estimator}
\end{align}
If the deterministic phase-center and quadrature errors are set aside, its
variance is therefore
\begin{align}
\Var\left(\widehat M_p^{\rm row}\right)
=\frac{\sigma_k^2}{|C_k|^2}(p!)^2\left(\frac{2}{k}\right)^{2p}.
\label{eq:row_variance}
\end{align}
This formula isolates the numerical difficulty of the one-row reduction.  The
factorial amplification occurs before the Hankel matrix is assembled; changing
the rank algorithm alone does not remove it.

\subsection{Total-order channels}
\label{subsec:total-order}

The two-angle Fourier matrix contains more information about the same leading
moment.  For a nonnegative total order \(p\), let
\[
\mathcal A_p=\{(m, n)\in\mathbb N_0^2: m+n=p\}.
\]
For \((m, n)\in\mathcal A_p\), define
\begin{align}
c_{mn}^{(p)}(k)&=C_k(-\ii)^m\ii^n
\frac{(k/2)^p}{m!\,n!},
\label{eq:cmn}
\\
d_{mn}(k)&=\frac{k^2}{4}
\left(\frac{1}{m+1}+\frac{1}{n+1}
\right).
\label{eq:dmn}
\end{align}
Equation \eqref{eq:bessel_product} gives
\begin{align}
a_{mn}^{\rm pc}(k)=c_{mn}^{(p)}(k)\left[M_p-d_{mn}(k)R_p+\rho_{mn,p}(k)\right],
\qquad m+n=p,
\label{eq:all_channel_model}
\end{align}
with
\begin{align}
|\rho_{mn,p}(k)|\leq
C_Pk^4\sum_{j=1}^{\Nc}|Q_j|r_j^{p+4}
\label{eq:rho_bound}
\end{align}
for any fixed finite range of retained orders.

In \eqref{eq:all_channel_model}, the leading quantity \(M_p\) is
independent of how \(p\) is split into \(m+n\).  Thus the \(p+1\)
coefficients indexed by \(\mathcal A_p\) provide repeated measurements of
the same scalar parameter.  At a fixed frequency the coefficient errors have
equal variance, so the least-squares estimate is
\begin{align}
\widehat M_p^{\rm all}=\frac{
\sum_{m=0}^{p}\overline{c_{m, p-m}^{(p)}}\,a_{m, p-m}^{(\Nang)}}{\sum_{m=0}^{p}|c_{m, p-m}^{(p)}|^2}.
\label{eq:all_estimator}
\end{align}

\begin{theorem}[Variance ratio at one frequency]
\label{thm:variance_ratio}
Under the leading model
\[
a_{m, p-m}^{(\Nang)}=c_{m, p-m}^{(p)}M_p+\eta_{m, p-m},
\]
the row estimator \eqref{eq:row_estimator} and the total-order estimator
\eqref{eq:all_estimator} satisfy
\begin{align}
\frac{\Var(\widehat M_p^{\rm row})}{\Var(\widehat M_p^{\rm all})}=\binom{2p}{p}.
\label{eq:variance_ratio}
\end{align}
Equivalently,
\begin{align}
\frac{\operatorname{sd}(\widehat M_p^{\rm row})}{\operatorname{sd}(\widehat M_p^{\rm all})}=
\sqrt{\binom{2p}{p}}.
\label{eq:sd_ratio}
\end{align}
\end{theorem}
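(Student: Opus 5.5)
Under the leading model, both estimators are unbiased linear functionals of the noise vector \((\eta_{m,p-m})_{m=0}^{p}\), so the plan is to compute each variance from Proposition~\ref{prop:fourier_noise} and take the ratio. First I verify that the row estimator \eqref{eq:row_estimator} is the \(m=p\) endpoint of the family. Comparing \eqref{eq:row_coefficient} with \eqref{eq:cmn} at \((m,n)=(p,0)\) shows \(c_p^{\rm row}(k)=c_{p,0}^{(p)}(k)\). Hence
\[
\widehat M_p^{\rm row}=M_p+\eta_{p0}/c_{p0}^{(p)},
\qquad
\Var(\widehat M_p^{\rm row})=\sigma_k^2/|c_{p0}^{(p)}|^2 .
\]
This agrees with \eqref{eq:row_variance}.

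For the total-order estimator, I substitute the model into \eqref{eq:all_estimator}. With \(S=\sum_{m=0}^p|c_{m,p-m}^{(p)}|^2\), this gives
\[
\widehat M_p^{\rm all}=M_p+S^{-1}\sum_m\overline{c_{m,p-m}^{(p)}}\,\eta_{m,p-m}.
\]
Proposition~\ref{prop:fourier_noise} says the \(\eta_{mn}\) are uncorrelated with common variance \(\sigma_k^2\). The variance of the weighted sum is therefore \(\sigma_k^2 S/S^2=\sigma_k^2/S\). The distinctness of the indices \((m,p-m)\) modulo \(\Nang\) is guaranteed by the standing assumption that orders stay below the Nyquist index, so the Kronecker deltas in \eqref{eq:fourier_cov} indeed kill all cross terms.

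The ratio is therefore \(S/|c_{p0}^{(p)}|^2\). From \eqref{eq:cmn}, \(|c_{m,p-m}^{(p)}|^2=|C_k|^2(k/2)^{2p}/(m!(p-m)!)^2\), since the unimodular factors \((-\ii)^m\ii^n\) drop out. Factoring out \(|C_k|^2(k/2)^{2p}/(p!)^2\) gives
\[
\frac{S}{|c_{p0}^{(p)}|^2}=\sum_{m=0}^p\binom{p}{m}^2 .
\]
This equals \(\binom{2p}{p}\) by Vandermonde's identity: compare coefficients of \(x^p\) in \((1+x)^p(1+x)^p=(1+x)^{2p}\) and use \(\binom pm=\binom p{p-m}\). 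The standard-deviation statement \eqref{eq:sd_ratio} follows by taking square roots.

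The only point needing care is the noise structure, not the algebra. The covariance identity must apply to all \(p+1\) channels simultaneously, including possible aliasing when \(p\) approaches \(\Nang\). I would invoke the Nyquist restriction stated before Proposition~\ref{prop:bessel-fourier} to exclude that case. I would also note that variances of complex estimators are taken as \(\mathbb E|\cdot-\mathbb E\cdot|^2\), consistent with \eqref{eq:fourier_cov}.
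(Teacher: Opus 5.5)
Your proposal is correct and follows the paper's proof essentially line for line: you identify the row estimator with the $(p,0)$ endpoint, use the uncorrelated equal-variance Fourier noise to reduce the ratio to $\sum_{m=0}^{p}|c_{m,p-m}^{(p)}|^2/|c_{p,0}^{(p)}|^2=\sum_{m=0}^{p}\binom{p}{m}^2=\binom{2p}{p}$, and take square roots for the standard-deviation statement. Your checks that $c_p^{\rm row}=c_{p,0}^{(p)}$ and that the Nyquist restriction keeps the $p+1$ Fourier coordinates distinct spell out details the paper's proof leaves implicit.
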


\begin{proof}
At fixed \(k\),
\[
\frac{|c_{m,p-m}^{(p)}|}
{|c_{p,0}^{(p)}|}=\frac{p!}{m!(p-m)!}=
\binom{p}{m}.
\]
The Fourier coefficient errors are uncorrelated with common variance, and
hence
\[
\frac{\Var(\widehat M_p^{\rm row})}{\Var(\widehat M_p^{\rm all})}=
\frac{\sum_{m=0}^{p}|c_{m,p-m}^{(p)}|^2}{|c_{p,0}^{(p)}|^2}=
\sum_{m=0}^{p}\binom{p}{m}^2=\binom{2p}{p}.
\]
\end{proof}

For the largest moment order used in the numerical experiments, \(p=8\),
the standard-deviation ratio in \eqref{eq:sd_ratio} is approximately
\(113.45\).  This reduction uses the same frequency and far-field matrix; it comes solely
from retaining angular information discarded by the row construction.

The same conclusion holds for the signed moments in
\eqref{eq:signed_moments}.  Indeed, changing the signs of the component
weights changes \(M_p\) but not the channel coefficients or the Fourier-noise
covariance.

\begin{corollary}[Signed moments]
\label{cor:signed_variance}
Under the signed phase-center model, the total-order estimator of
\(M_p^{\rm sgn}\) satisfies
\begin{align}
\frac{\Var(\widehat M_{p,\rm row}^{\rm sgn})}{\Var(\widehat M_{p,\rm all}^{\rm sgn})}=\binom{2p}{p}.
\label{eq:signed_variance_ratio}
\end{align}
Thus the same multichannel variance reduction applies to the moments used for
cavity detection.
\end{corollary}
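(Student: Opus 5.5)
The plan is to reduce Corollary~\ref{cor:signed_variance} to Theorem~\ref{thm:variance_ratio}. The point to check is that the proof of that theorem never used the particular form of \(M_p\). It used only three ingredients: the leading linear model \(a_{m,p-m}^{(\Nang)}=c_{m,p-m}^{(p)}M_p+\eta_{m,p-m}\), the explicit channel coefficients \eqref{eq:cmn}, and the noise covariance of Proposition~\ref{prop:fourier_noise}.

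First, write the signed phase-center coefficient. Under the signed geometry \eqref{eq:signed_geometry}, the Born contrast equals \(q^+\) on \(\bigcup_j G_j\) and vanishes on the cavities. It can therefore be written as \(q^+\mathbf 1_{\cup G_j}-q^+\mathbf 1_{\cup B_\ell}\). Applying the phase-center reduction to each piece gives a version of \eqref{eq:phase_center_coeff} with point masses \(Q_j^+\) at \(\lambda_j^+\) and \(-Q_\ell^-\) at \(\lambda_\ell^-\).

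Next, insert the Bessel product expansion \eqref{eq:bessel_product}, exactly as was done to obtain \eqref{eq:all_channel_model}. Because the expansion is linear in the weights, the leading term on every anti-diagonal channel \((m,p-m)\in\mathcal A_p\) is \(c_{m,p-m}^{(p)}M_p^{\rm sgn}\). The coefficient \(c_{m,p-m}^{(p)}\) is the same as before, since it depends only on \(C_k\), \(k\), \(m\) and \(n\), and not on the weights or their signs. Merging coincident nodes as in \eqref{eq:effective_signed} changes only the representation of \(M_p^{\rm sgn}\), not the model. Hence the signed leading model has exactly the form assumed in Theorem~\ref{thm:variance_ratio}, with \(M_p\) replaced by \(M_p^{\rm sgn}\).

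Then observe that the noise is unchanged. The \(\eta_{mn}\) come from the relative Frobenius perturbation \eqref{eq:noise_model}, and Proposition~\ref{prop:fourier_noise} gives uncorrelated errors with common variance \(\sigma_k^2\). This holds for any far-field matrix \(U^\infty\), so in particular for the signed one; only the value of \(\sigma_k\) depends on \(\|U^\infty\|_F\).

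With these facts in hand, define the signed row estimator and total-order estimator by \eqref{eq:row_estimator} and \eqref{eq:all_estimator}. Their variances are \(\sigma_k^2/|c_{p,0}^{(p)}|^2\) and \(\sigma_k^2/\sum_m|c_{m,p-m}^{(p)}|^2\). Taking the ratio, the common factor \(\sigma_k^2\) cancels, and the Vandermonde identity \(\sum_m\binom{p}{m}^2=\binom{2p}{p}\) yields \eqref{eq:signed_variance_ratio}.

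The main obstacle is the first step: the signed moment must enter the data linearly with the same coefficients, so that no sign-dependent factor appears in \(c^{(p)}_{mn}\). Once one notes that the contrast is a signed superposition and that the Born map is linear in \(q\), this is immediate. The remainder is a direct appeal to Theorem~\ref{thm:variance_ratio}.
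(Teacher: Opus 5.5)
Your proposal is correct and follows the paper's own argument. The paper justifies the corollary with the one-line remark before it: changing the signs of the component weights changes $M_p$ but neither the channel coefficients $c^{(p)}_{mn}$ nor the uncorrelated, equal-variance structure of the Fourier noise, so Theorem~\ref{thm:variance_ratio} applies verbatim. You also note that $\sigma_k^2$ does depend on $\|U^\infty\|_F$ but cancels in the ratio; this is slightly more precise than the paper's wording that the noise covariance is unchanged.
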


\subsection{Several frequencies and the first radial correction}
\label{subsec:multifrequency}

A single frequency is sufficient for Theorem~\ref{thm:variance_ratio}, but
several frequencies provide additional measurements of the same phase nodes.
Let \(k_1,\ldots,k_K\) be measured frequencies and collect all
frequency--channel pairs with \(m+n=p\).  For an index \(i=(\ell,m,n)\), write
\begin{align}
y_i=c_iM_p+r_i+\eta_i,
\label{eq:multi_linear_model}
\end{align}
where \(r_i\) contains deterministic Bessel, finite-size, and quadrature
errors, while
\[
\Var(\eta_i)=\sigma_i^2.
\]
Define
\begin{align}
\mathcal J_p=\sum_i\frac{|c_i|^2}{\sigma_i^2}.
\label{eq:information}
\end{align}
The generalized least-squares estimate is
\begin{align}
\widehat M_p^{(0)}=
\mathcal J_p^{-1}\sum_i\frac{\overline{c_i}y_i}{\sigma_i^2}.
\label{eq:gls}
\end{align}

\begin{proposition}[Bias and variance of the multichannel estimate]
\label{prop:moment_covariance}
For \eqref{eq:multi_linear_model},
\begin{align}
\mathbb E\widehat M_p^{(0)}-M_p
&=\mathcal J_p^{-1}\sum_i\frac{\overline{c_i}r_i}{\sigma_i^2},
\label{eq:gls_bias}
\\
\mathbb E\left|\widehat M_p^{(0)}-\mathbb E\widehat M_p^{(0)}\right|^2
&=\mathcal J_p^{-1}.
\label{eq:gls_variance}
\end{align}
\end{proposition}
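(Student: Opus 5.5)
The plan is to substitute the linear model \eqref{eq:multi_linear_model} directly into the generalized least-squares formula \eqref{eq:gls}. I will use only the following assumptions: the noise terms \(\eta_i\) have zero mean and variances \(\sigma_i^2\); they are mutually uncorrelated; and the \(r_i\) are deterministic. Uncorrelatedness follows from Proposition~\ref{prop:fourier_noise} within one frequency. Across frequencies it holds because the measurements are independent. Inserting \(y_i=c_iM_p+r_i+\eta_i\) gives
\[
\widehat M_p^{(0)}=\mathcal J_p^{-1}\sum_i\frac{|c_i|^2}{\sigma_i^2}M_p+\mathcal J_p^{-1}\sum_i\frac{\overline{c_i}r_i}{\sigma_i^2}+\mathcal J_p^{-1}\sum_i\frac{\overline{c_i}\eta_i}{\sigma_i^2}.
\]
By the definition \eqref{eq:information}, the first term is exactly \(M_p\).

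For the bias, I will take expectations of this decomposition. The \(r_i\) are deterministic and \(\mathbb E\eta_i=0\), so the last term has mean zero. Subtracting \(M_p\) then yields \eqref{eq:gls_bias} immediately.

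For the variance, the fluctuation \(\widehat M_p^{(0)}-\mathbb E\widehat M_p^{(0)}\) is the pure noise term \(\mathcal J_p^{-1}\sum_i\overline{c_i}\eta_i/\sigma_i^2\). I will expand its squared modulus as a double sum over \(i,i'\) and use \(\mathbb E[\eta_i\overline{\eta_{i'}}]=\sigma_i^2\delta_{ii'}\). This collapses the double sum to \(\mathcal J_p^{-2}\sum_i |c_i|^2\sigma_i^2/\sigma_i^4=\mathcal J_p^{-2}\mathcal J_p\), which equals \(\mathcal J_p^{-1}\) and gives \eqref{eq:gls_variance}.

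The main point requiring care is the uncorrelatedness across the full index set \(i=(\ell,m,n)\). Within a single frequency it comes from Proposition~\ref{prop:fourier_noise}, since the pairs \((m,n)\in\mathcal A_p\) are distinct and lie below the Nyquist index. Across frequencies I will state it explicitly as independence of the noise realizations at different \(k_\ell\). Apart from this, the argument is the standard Gauss--Markov computation for a scalar complex parameter, so no further obstacle is expected.
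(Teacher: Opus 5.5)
Your proposal is correct and follows the same route as the paper. You substitute the model into the generalized least-squares formula, take expectations to isolate the bias, and use the diagonal noise covariance to show that the weighted numerator has variance \(\mathcal J_p\). Your explicit statement that the noise must also be uncorrelated across frequencies (independent realizations at different \(k_\ell\)) makes precise a hypothesis that the paper's brief proof leaves implicit.
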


\begin{proof}
Take the expectation of \eqref{eq:gls} and use the diagonal Fourier-noise
covariance.  The variance of the weighted numerator is
\(\mathcal J_p\), which gives \eqref{eq:gls_variance}.
\end{proof}

Adding independent frequency measurements increases the information sum
\(\mathcal J_p\).  Using larger frequencies, however, also weakens the
small-argument Bessel approximation.  We therefore retain the first radial
correction in \eqref{eq:all_channel_model}.  The resulting local model is
\begin{align}
y_i=c_i M_p-c_i d_i R_p+\widetilde r_i+\eta_i.
\label{eq:second_order_model}
\end{align}
With
\[
X_p=
\begin{bmatrix}
c_i & -c_id_i
\end{bmatrix}_i,
\qquad
\Sigma_p=\diag(\sigma_i^2)_i,
\]
set
\begin{align}
\widehat\theta_p=
(X_p^*\Sigma_p^{-1}X_p)^{-1}X_p^*\Sigma_p^{-1}y_p,
\qquad
\theta_p=
\begin{bmatrix}
M_p\\R_p
\end{bmatrix}.
\label{eq:second_gls}
\end{align}
The first component is denoted by \(\widehat M_p^{(2)}\).

\begin{proposition}[Second-order Bessel correction]
\label{prop:second_order}
Suppose \(k_\ell=s\kappa_\ell\), the retained orders are fixed, and the
normalized two-column design in \eqref{eq:second_gls} remains nondegenerate
as \(s\to0\).  For noiseless phase-center data,
\begin{align}
\widehat M_p^{(0)}-M_p
&=
O(s^2),
\label{eq:O2}
\\
\widehat M_p^{(2)}-M_p
&=
O(s^4).
\label{eq:O4}
\end{align}
Moreover,
\begin{align}
\operatorname{Cov}(\widehat\theta_p)
&=
(X_p^*\Sigma_p^{-1}X_p)^{-1}.
\label{eq:second_cov}
\end{align}
\end{proposition}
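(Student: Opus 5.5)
The argument splits into two parts: a bias analysis for noiseless data, and an exact covariance computation for noisy data. In each part I would first reduce the estimators to their deterministic structure and then bound or compute the remaining terms.

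\textbf{Noiseless error of $\widehat M_p^{(0)}$.} With $\eta_i=0$, Proposition~\ref{prop:moment_covariance} (with $\sigma_i$ kept only as weights) gives
\[
\widehat M_p^{(0)}-M_p=\mathcal J_p^{-1}\sum_i \overline{c_i}r_i\sigma_i^{-2}.
\]
From \eqref{eq:all_channel_model}, the residual is
\[
r_i=c_i\bigl(-d_iR_p+\rho_i\bigr),
\]
so this error is a weighted average of $-d_iR_p+\rho_i$ with nonnegative weights $w_i=|c_i|^2\sigma_i^{-2}/\mathcal J_p$ summing to one. Since $k_\ell=s\kappa_\ell$, we have $d_i=O(s^2)$ uniformly over the finitely many retained pairs, and $\rho_i=O(s^4)$ by \eqref{eq:rho_bound}. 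Hence the average is $O(s^2)$, which is \eqref{eq:O2}. The point of normalizing by $c_i$ is that the weights are scale-free, so the $s$-dependence of $c_i\propto s^p$ and of $\sigma_i$ cancels.

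\textbf{Noiseless error of $\widehat M_p^{(2)}$.} GLS reproduces the model exactly, so
\[
\widehat\theta_p-\theta_p=(X_p^*\Sigma_p^{-1}X_p)^{-1}X_p^*\Sigma_p^{-1}\widetilde r,
\qquad \widetilde r_i=c_i\rho_i .
\]
I would rescale the design as $X_p=C\,\widetilde X\,S$, where $C=\diag(c_i)$, $\widetilde X$ has rows $(1,\,-\widetilde d_i)$ with $\widetilde d_i=d_i/s^2=O(1)$, and $S=\diag(1,s^2)$. Substituting gives
\[
\widehat\theta_p-\theta_p=S^{-1}\bigl(\widetilde X^*W\widetilde X\bigr)^{-1}\widetilde X^*W\rho,
\qquad W=C^*\Sigma_p^{-1}C .
\]
Normalizing $W$ by its trace leaves this expression unchanged. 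The nondegeneracy hypothesis is read as a uniform bound on $(\widetilde X^*\widetilde W\widetilde X)^{-1}$ for the normalized weights $\widetilde W$. Then $\widetilde X^*\widetilde W\rho=O(s^4)$. The factor $S^{-1}$ multiplies the first component by $1$, giving $\widehat M_p^{(2)}-M_p=O(s^4)$, which is \eqref{eq:O4}. It multiplies the second component by $s^{-2}$, so the $R_p$ estimate has error only $O(s^2)$, but that estimate is not claimed.

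\textbf{Covariance.} Here I use linearity. The estimator is $\widehat\theta_p=L y_p$ with $L=(X^*\Sigma^{-1}X)^{-1}X^*\Sigma^{-1}$. The noise covariance is $\Sigma$: uncorrelatedness within a frequency comes from Proposition~\ref{prop:fourier_noise}, and independence across frequencies is assumed. Therefore
\[
\operatorname{Cov}(\widehat\theta_p)=L\Sigma L^*=(X^*\Sigma^{-1}X)^{-1},
\]
which is \eqref{eq:second_cov}.

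The main obstacle is the second part: stating the nondegeneracy assumption precisely enough that the $s^{-2}$ scaling in the second column does not contaminate the $M_p$ component. The block-scaling identity above is exactly what makes this work.
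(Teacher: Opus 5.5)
Your proposal is correct and follows the paper's route. You divide by $c_i$ to expose $M_p-s^2\widetilde d_iR_p+O(s^4)$, observe that the one-parameter fit leaves the quadratic term as an $O(s^2)$ intercept bias while the two-parameter fit absorbs it, and invoke the standard GLS covariance identity. Your rescaling $X_p=C\widetilde X S$ with $S=\diag(1,s^2)$ makes explicit what the paper only asserts in one line, namely that the nondegeneracy hypothesis keeps the $O(s^4)$ remainder from being amplified in the recovered intercept.
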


\begin{proof}
After division by \(c_i\), equation
\eqref{eq:all_channel_model} becomes
\[
M_p-s^2\widetilde d_iR_p+O(s^4).
\]
A one-parameter fit omits the quadratic term and therefore has a
second-order intercept bias.  The two-parameter fit represents that term
explicitly.  The normalized-design assumption prevents the fourth-order
remainder from being amplified in the recovered intercept.  The covariance
formula is the standard generalized least-squares identity.
\end{proof}

The second-order estimate is intended for a different numerical regime from
the leading estimate.  It is useful when deterministic Bessel bias
limits localization, but the additional radial parameter increases variance.
The numerical experiments below illustrate this bias--variance trade-off.

\subsection{Effect on the Hankel rank}
\label{subsec:rank-stability}

We next connect the moment error to the numerical Hankel rank.  Write
\begin{align}
\widehat M_p
&=
M_p+\beta_p+\zeta_p,
\qquad
\mathbb E\zeta_p=0,
\qquad
\mathbb E|\zeta_p|^2=v_p,
\label{eq:moment_error}
\end{align}
where \(\beta_p\) collects deterministic model and discretization errors.
For the leading generalized least-squares estimator,
\[
v_p=\mathcal J_p^{-1}.
\]
The error \(\zeta_p\) appears several times on the \(p\)-th anti-diagonal of
the Hankel matrix.  Define
\begin{align}
\nu_p
&=
\begin{cases}
p+1, & 0\leq p\leq J-1,\\
2J-1-p, & J\leq p\leq2J-2,
\end{cases}
\label{eq:multiplicity}
\end{align}
and set
\begin{align}
\mathcal V_J
&=
\sum_{p=0}^{2J-2}
\nu_pv_p.
\label{eq:hankel_noise_energy}
\end{align}
Then \(\mathcal V_J\) is the expected squared Frobenius norm of the stochastic
Hankel perturbation.

\begin{theorem}[Hankel rank stability]
\label{thm:rank_stability}
Let an exact Hankel matrix \(H_J\) have rank \(r\), with
\[
s_*=\sigma_r(H_J)>0,
\qquad J\geq r+1.
\]
Suppose
\[
\widehat H_J=H_J+B_J+Z_J,
\qquad \mathbb E\|Z_J\|_F^2=\mathcal V_J.
\]
If
\begin{align}
\|B_J\|_2+t&<\frac12s_*,
\label{eq:rank_condition}
\end{align}
then, with probability at least \(1-\mathcal V_J/t^2\), the threshold rank of
\(\widehat H_J\) is \(r\) for every absolute threshold satisfying
\begin{align}
\|B_J\|_2+t<\tau_{\rm abs}<s_*-\|B_J\|_2-t.
\label{eq:threshold_interval}
\end{align}
\end{theorem}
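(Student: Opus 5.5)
The proof combines Weyl's perturbation inequality for singular values with Markov's inequality applied to the squared Frobenius norm of the stochastic perturbation. The deterministic part \(B_J\) is handled without any probabilistic reasoning, while \(Z_J\) is controlled only through its second moment \(\mathcal V_J\).

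First we isolate a good event. Since \(\|Z_J\|_2\leq\|Z_J\|_F\), Markov's inequality applied to \(\|Z_J\|_F^2\) gives
\[
\mathbb P\big(\|Z_J\|_2\geq t\big)\leq\mathbb P\big(\|Z_J\|_F^2\geq t^2\big)\leq\frac{\mathcal V_J}{t^2}.
\]
Hence, with probability at least \(1-\mathcal V_J/t^2\), we have \(\|Z_J\|_2<t\). On this event the total perturbation \(E_J=B_J+Z_J\) satisfies \(\|E_J\|_2<\|B_J\|_2+t\). The identification of \(\mathcal V_J\) with \(\mathbb E\|Z_J\|_F^2\) is given by hypothesis; it is consistent with the anti-diagonal multiplicities \(\nu_p\) in \eqref{eq:multiplicity}, but it is not needed further.

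Second, on this event we apply Weyl's inequality for singular values, \(|\sigma_i(\widehat H_J)-\sigma_i(H_J)|\leq\|E_J\|_2\) for every \(i\). The exact matrix has rank \(r\), so \(\sigma_i(H_J)=0\) for \(i>r\); such indices exist because \(J\geq r+1\). For \(i\leq r\) we have \(\sigma_i(H_J)\geq s_*\). Therefore
\[
\sigma_i(\widehat H_J)>s_*-\|B_J\|_2-t\quad(i\leq r),
\qquad
\sigma_i(\widehat H_J)<\|B_J\|_2+t\quad(i>r).
\]
Condition \eqref{eq:rank_condition} says exactly that \(\|B_J\|_2+t<s_*-\|B_J\|_2-t\), so the interval \eqref{eq:threshold_interval} is nonempty. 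Any \(\tau_{\rm abs}\) in that interval lies strictly above the top \(J-r\) perturbed singular values and strictly below the first \(r\). Taking the threshold rank to mean the number of singular values exceeding \(\tau_{\rm abs}\), it equals \(r\), and this holds simultaneously for all admissible thresholds on the same event.

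No step is a genuine obstacle; the argument is a direct combination of standard inequalities. The only points needing care are strictness and conventions. Markov's inequality controls \(\mathbb P(\|Z_J\|_F\geq t)\), so on the complementary event the bound \(\|Z_J\|_2<t\) is strict, and this strictness propagates to both inequalities above. For complex Hankel matrices, Weyl's inequality is applied in its singular-value form, which is valid for arbitrary complex matrices. We would also state explicitly that "threshold rank" means \(\#\{i:\sigma_i(\widehat H_J)>\tau_{\rm abs}\}\).
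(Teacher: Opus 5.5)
Your proposal is correct and follows the paper's proof essentially step for step. Both arguments use \(\|Z_J\|_2\le\|Z_J\|_F\) and Markov's inequality on \(\|Z_J\|_F^2\) to get a good event of probability at least \(1-\mathcal V_J/t^2\), and then apply Weyl's singular-value inequality on that event to separate the \(r\) signal singular values of \(\widehat H_J\) from the tail under \eqref{eq:rank_condition}; your treatment of strictness and your explicit definition of the threshold rank fill in details the paper leaves implicit.
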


\begin{proof}
Since \(\|Z_J\|_2\leq\|Z_J\|_F\), Markov's inequality gives
\[
\mathbb P(\|Z_J\|_2\geq t)\leq\frac{\mathcal V_J}{t^2}.
\]
On the complementary event,
\[
\|\widehat H_J-H_J\|_2\leq\|B_J\|_2+t.
\]
Weyl's inequality separates the last signal singular value from the first
tail singular value under \eqref{eq:rank_condition}.
\end{proof}

For the phase-center moments in \eqref{eq:moment_error},
\(B_J=(\beta_{r+s})\) and \(\mathcal V_J\) is given by
\eqref{eq:hankel_noise_energy}.  More total-order channels reduce the
stochastic perturbation but do not change the exact singular gap.

\subsection{Radial extrapolation and stability}
\label{subsec:radial_stability}

For \(m\geq1\), the exact radial signals in \(a_{m,-m}\) and \(a_{-m,m}\)
are equal, so we average them; for \(m=0\) we use \(a_{00}\).  For retained
orders below the angular Nyquist index these are distinct Fourier coordinates.
Proposition~\ref{prop:fourier_noise} therefore shows that the averaging halves
the coefficient variance for \(m\geq1\).

Let \(k_1,\ldots,k_K\) be measured frequencies, with independent noise
realizations across frequencies, and put
\[
x_\ell=(k_\ell R_*)^2.
\]
After the scaling in \eqref{eq:scaled_radial_coeff}, write
\begin{align}
y_{m\ell}
&=\mu_m^{\rm rad}+b_mx_\ell+r_{m\ell}+\varepsilon_{m\ell},
\qquad b_m=-\frac{\mu_{m+1}^{\rm rad}}{2(m+2)}.
\label{eq:radial_regression_model}
\end{align}
For fixed retained orders,
\begin{align}
|r_{m\ell}|&\leq C_Mx_\ell^2A_{m+2}^{\rm rad}.
\label{eq:radial_regression_remainder}
\end{align}
If \(g_{m\ell}\) is the deterministic scaling factor in
\eqref{eq:scaled_radial_coeff}, then
\begin{align}
\tau_{m\ell}^2:=\operatorname{Var}(\varepsilon_{m\ell})
&=\chi_m|g_{m\ell}|^2\sigma_{k_\ell}^2,
\qquad \chi_0=1,
\quad \chi_m=\frac12\quad(m\geq1).
\label{eq:radial_noise_variance}
\end{align}

Let \(X\in\R^{K\times2}\) have rows \((1,x_\ell)\), let
\(\Sigma_m=\diag(\tau_{m1}^2,\ldots,\tau_{mK}^2)\), and define
\begin{align}
\widehat\vartheta_m
&=(X^*\Sigma_m^{-1}X)^{-1}X^*\Sigma_m^{-1}y_m,
\qquad \widehat\mu_m^{\rm rad}=e_1^T\widehat\vartheta_m.
\label{eq:radial_gls}
\end{align}

\begin{proposition}[Bias and variance of the radial extrapolation]
\label{prop:radial_gls}
Let
\[
w_m^T=e_1^T(X^*\Sigma_m^{-1}X)^{-1}X^*\Sigma_m^{-1}.
\]
Then
\begin{align}
\mathbb E\widehat\mu_m^{\rm rad}-\mu_m^{\rm rad}&=w_m^Tr_m,
\label{eq:radial_gls_bias}
\\
\operatorname{Var}(\widehat\mu_m^{\rm rad})
&=e_1^T(X^*\Sigma_m^{-1}X)^{-1}e_1=:v_m^{\rm rad},
\label{eq:radial_gls_variance}
\end{align}
and
\begin{align}
\left|\mathbb E\widehat\mu_m^{\rm rad}-\mu_m^{\rm rad}\right|
&\leq C_M\|w_m\|_1x_{\max}^2A_{m+2}^{\rm rad}.
\label{eq:radial_bias_bound}
\end{align}
\end{proposition}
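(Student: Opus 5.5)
The argument is the standard bias--variance computation for a weighted least-squares estimator. The key fact is that \(w_m^T\) is a left inverse of \(X\) on the intercept. We write the regression model \eqref{eq:radial_regression_model} in vector form as
\[
y_m=X\vartheta_m+r_m+\varepsilon_m,\qquad \vartheta_m=(\mu_m^{\rm rad},b_m)^T,
\]
where \(r_m=(r_{m\ell})_\ell\) and \(\varepsilon_m=(\varepsilon_{m\ell})_\ell\). By \eqref{eq:radial_gls}, \(\widehat\mu_m^{\rm rad}=w_m^Ty_m\). Since \((X^*\Sigma_m^{-1}X)^{-1}X^*\Sigma_m^{-1}X=I\), we have \(w_m^TX=e_1^T\), so \(w_m^TX\vartheta_m=\mu_m^{\rm rad}\). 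Hence
\[
\widehat\mu_m^{\rm rad}-\mu_m^{\rm rad}=w_m^Tr_m+w_m^T\varepsilon_m.
\]
This identity needs \(X^*\Sigma_m^{-1}X\) to be invertible. That holds as soon as at least two of the \(x_\ell\) are distinct, which is implicit in the definition \eqref{eq:radial_gls}.

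For the bias, we take expectations. The noise has zero mean by Proposition~\ref{prop:fourier_noise}, since the scaling \(g_{m\ell}\) and the averaging of \(a_{m,-m}\) and \(a_{-m,m}\) are deterministic and linear. This gives \eqref{eq:radial_gls_bias} immediately. For the bound \eqref{eq:radial_bias_bound}, we apply H\"older's inequality:
\[
|w_m^Tr_m|\le\|w_m\|_1\max_\ell|r_{m\ell}|.
\]
We then insert \eqref{eq:radial_regression_remainder} and use \(x_\ell\le x_{\max}\).

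For the variance, we need the covariance of \(\varepsilon_m\) to be exactly \(\Sigma_m\), i.e. diagonal. Noise realizations at different frequencies are independent by assumption, so the \(\varepsilon_{m\ell}\) are uncorrelated across \(\ell\). Their variances are \(\tau_{m\ell}^2\) by \eqref{eq:radial_noise_variance}. For \(m\ge1\), that formula relies on \(a_{m,-m}\) and \(a_{-m,m}\) being distinct Fourier coordinates below the Nyquist index, so that Proposition~\ref{prop:fourier_noise} makes their errors uncorrelated and averaging gives the factor \(\tfrac12\). Then
\[
\operatorname{Var}(\widehat\mu_m^{\rm rad})=w_m^T\Sigma_m\overline{w_m},
\]
where \(w_m\) is real because \(X\) and \(\Sigma_m\) are real. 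Substituting \(w_m^T=e_1^TG^{-1}X^*\Sigma_m^{-1}\) with \(G=X^*\Sigma_m^{-1}X\), the product collapses to
\[
e_1^TG^{-1}(X^*\Sigma_m^{-1}\Sigma_m\Sigma_m^{-1}X)G^{-1}e_1=e_1^TG^{-1}e_1,
\]
which is \eqref{eq:radial_gls_variance}.

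The only step needing care is this diagonal-covariance justification. Everything else is routine linear algebra. One further subtlety is that \(\varepsilon_{m\ell}\) is complex. Then the variance means \(\mathbb E|\cdot|^2\), and the computation uses the Hermitian form \(\mathbb E[\varepsilon\varepsilon^*]=\Sigma_m\), which still gives the same collapse.
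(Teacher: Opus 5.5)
Your proof is correct and follows the same route as the paper: your identity $w_m^TX=e_1^T$ is exactly the paper's observation that the weighted fit reproduces constant and linear terms ($w_m^T\mathbf 1=1$, $w_m^Tx=0$), so only the remainder $w_m^Tr_m$ survives in the bias, and the variance follows from the diagonal covariance $\Sigma_m$ via the standard collapse of the weighted least-squares covariance. You also write out the H\"older step for the bias bound and the justification that the noise covariance is diagonal, both of which the paper leaves implicit.
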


\begin{proof}
The weighted fit reproduces the constant and linear terms, so
\(w_m^T\mathbf1=1\) and \(w_m^Tx=0\).  The bias therefore contains only the
remainder.  The covariance identity follows from
\eqref{eq:radial_noise_variance}.
\end{proof}

Write
\[
\widehat\mu_m^{\rm rad}=\mu_m^{\rm rad}+\beta_m^{\rm rad}+\zeta_m^{\rm rad},
\qquad \mathbb E|\zeta_m^{\rm rad}|^2=v_m^{\rm rad}.
\]
For
\[
b_m^{\rm det}=C_M\|w_m\|_1x_{\max}^2A_{m+2}^{\rm rad},
\]
Proposition~\ref{prop:radial_gls} gives
\(|\beta_m^{\rm rad}|\leq b_m^{\rm det}\).  Hence, with
\begin{align}
\mathcal B_J^{\rm rad}
&=\left(\sum_{m=0}^{2J-2}\nu_m(b_m^{\rm det})^2\right)^{1/2},
\label{eq:radial_hankel_bias}
\\
\mathcal V_J^{\rm rad}
&=\sum_{m=0}^{2J-2}\nu_mv_m^{\rm rad},
\label{eq:radial_hankel_variance}
\end{align}
we have
\(\|B_J^{\rm rad}\|_2\leq\mathcal B_J^{\rm rad}\) and
\(\mathbb E\|Z_J^{\rm rad}\|_F^2=\mathcal V_J^{\rm rad}\).
Here \(\nu_m\) is given by \eqref{eq:multiplicity}.

\begin{corollary}[Stable radial interface count]
\label{cor:radial_rank_stability}
Let \(s_*^{\rm rad}=\sigma_L(H_J^{\rm rad})\), with \(J\geq L+1\).
For a confidence parameter \(0<\alpha<1\), set
\begin{align}
\varepsilon_{J,\alpha}^{\rm rad}
&=\mathcal B_J^{\rm rad}
+\sqrt{\frac{\mathcal V_J^{\rm rad}}{\alpha}}.
\label{eq:radial_confidence_radius}
\end{align}
If
\begin{align}
\varepsilon_{J,\alpha}^{\rm rad}
&<\frac12s_*^{\rm rad},
\label{eq:radial_rank_condition}
\end{align}
then, with probability at least \(1-\alpha\), every absolute threshold in
\begin{align}
\varepsilon_{J,\alpha}^{\rm rad}
<\tau_{\rm abs}<
 s_*^{\rm rad}-\varepsilon_{J,\alpha}^{\rm rad}
\label{eq:radial_threshold_window}
\end{align}
returns exactly \(L\) radial interfaces.  On the same event, the first tail
singular value satisfies
\(\sigma_{L+1}(\widehat H_J^{\rm rad})\leq
\varepsilon_{J,\alpha}^{\rm rad}\), irrespective of the size of the signal
gap.
\end{corollary}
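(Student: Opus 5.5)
The plan is to specialize Theorem~\ref{thm:rank_stability} to the radial Hankel matrix, with $t=\sqrt{\mathcal V_J^{\rm rad}/\alpha}$. Write
\[
\widehat H_J^{\rm rad}=H_J^{\rm rad}+B_J^{\rm rad}+Z_J^{\rm rad},
\]
where $B_J^{\rm rad}=(\beta_{r+s}^{\rm rad})$ collects the deterministic extrapolation bias and $Z_J^{\rm rad}=(\zeta_{r+s}^{\rm rad})$ the mean-zero noise. By Theorem~\ref{thm:radial_rank}, the exact matrix has rank $L$, and $s_*^{\rm rad}=\sigma_L(H_J^{\rm rad})>0$ because the radii are distinct and the weights are nonzero. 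The bias matrix is controlled deterministically. Each entry $\beta_m^{\rm rad}$ appears $\nu_m$ times on the $m$-th anti-diagonal, and Proposition~\ref{prop:radial_gls} gives $|\beta_m^{\rm rad}|\le b_m^{\rm det}$. Hence
\[
\|B_J^{\rm rad}\|_2\le\|B_J^{\rm rad}\|_F\le\mathcal B_J^{\rm rad}.
\]

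For the stochastic part, $\mathbb E\|Z_J^{\rm rad}\|_F^2=\sum_m\nu_m v_m^{\rm rad}=\mathcal V_J^{\rm rad}$, using only the diagonal second moments of the anti-diagonal entries. Markov's inequality applied to $\|Z_J^{\rm rad}\|_F^2$, together with $\|\cdot\|_2\le\|\cdot\|_F$, gives
\[
\mathbb P\Bigl(\|Z_J^{\rm rad}\|_2\ge\sqrt{\mathcal V_J^{\rm rad}/\alpha}\Bigr)\le\alpha.
\]
On the complementary event, which has probability at least $1-\alpha$, we then have $\|\widehat H_J^{\rm rad}-H_J^{\rm rad}\|_2\le\varepsilon_{J,\alpha}^{\rm rad}$.

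Weyl's inequality for singular values then gives $\sigma_L(\widehat H_J^{\rm rad})\ge s_*^{\rm rad}-\varepsilon_{J,\alpha}^{\rm rad}$. Since $\sigma_{L+1}(H_J^{\rm rad})=0$, which is meaningful because $J\ge L+1$, it also gives $\sigma_{L+1}(\widehat H_J^{\rm rad})\le\varepsilon_{J,\alpha}^{\rm rad}$. The second bound holds irrespective of the size of the signal gap, which proves the last claim. Condition \eqref{eq:radial_rank_condition} makes the window \eqref{eq:radial_threshold_window} nonempty. Any $\tau_{\rm abs}$ in that window lies strictly below the $L$ leading singular values and strictly above all tail singular values, so the threshold rank equals $L$.

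The only point needing care is the Markov step. The event must be chosen once, independently of $\tau_{\rm abs}$, so that the conclusion holds simultaneously for every threshold in the window. One must also check that the bias bound passes through the Frobenius norm with exactly the multiplicities $\nu_m$. Both checks are routine, and the argument is essentially Theorem~\ref{thm:rank_stability} with this choice of $t$ made explicit.
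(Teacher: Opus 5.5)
Your proposal is correct and follows the paper's own argument. You bound $\|B_J^{\rm rad}\|_2$ by $\mathcal B_J^{\rm rad}$ through the Frobenius norm with the multiplicities $\nu_m$, apply Markov's inequality with $t=\sqrt{\mathcal V_J^{\rm rad}/\alpha}$, and conclude with Weyl's inequality as in Theorem~\ref{thm:rank_stability}, and your explicit Weyl step also supplies the tail bound $\sigma_{L+1}(\widehat H_J^{\rm rad})\le\varepsilon_{J,\alpha}^{\rm rad}$ that the paper leaves implicit.

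One cosmetic point: the paper states the Markov event with a strict inequality, $\|Z_J^{\rm rad}\|_2>\sqrt{\mathcal V_J^{\rm rad}/\alpha}$. That version stays valid in the degenerate noiseless case $\mathcal V_J^{\rm rad}=0$, where your non-strict version fails, and the open threshold window still gives rank exactly $L$ on the complementary event.
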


\begin{proof}
The deterministic bound follows from
\eqref{eq:radial_bias_bound} and the anti-diagonal multiplicities.  Markov's
inequality gives
\[
\mathbb P\left(\|Z_J^{\rm rad}\|_2>
\sqrt{\mathcal V_J^{\rm rad}/\alpha}\right)\leq\alpha.
\]
Apply Theorem~\ref{thm:rank_stability} on the complementary event.
\end{proof}

The radial pencil is covered by Proposition~\ref{prop:pencil_perturbation}.
For real positive exact nodes, suppose a matched estimate satisfies
\(|\widehat\xi_\ell-\xi_\ell|<\xi_\ell\) and is projected to the positive
real axis when a small imaginary part is produced by noise.  Then the radius
map is locally Lipschitz; in particular, for positive matched nodes,
\[
|\widehat R_\ell-R_\ell|
=R_*\frac{|\widehat\xi_\ell-\xi_\ell|}
{\sqrt{\widehat\xi_\ell}+\sqrt{\xi_\ell}}
\leq
\frac{R_*}{\sqrt{\xi_\ell}}
|\widehat\xi_\ell-\xi_\ell|.
\]
Thus close radial interfaces and small inner radii increase the sensitivity of
the recovered radii, just as close phase centers increase the conditioning of
the phase-center pencil.

\section{Numerical experiments}
\label{sec:numerics}

The experiments test the mechanisms isolated in Sections~\ref{sec:single}
and~\ref{sec:multi}.  Unless stated otherwise, the far field is sampled on a
\(128\times128\) uniform grid of observation and incident directions.
Examples~\ref{subsec:example1}--\ref{subsec:example6} use analytic Born data,
so their reported errors contain the finite-size Bessel effect but no
volume-quadrature error.  Example~\ref{subsec:example7} is different: its data
are generated by a full-wave Helmholtz solve and the Born approximation is
used only in the reconstruction model.  Noise is added to the full far-field
matrix through \eqref{eq:noise_model} when specified.
For a Hankel matrix \(H\), we use the diagnostic relative rank
\begin{align}
\operatorname{rank}_{\tau}(H)=
\#\left\{j:\frac{\sigma_j(H)}{\sigma_1(H)}>\tau\right\},
\qquad \tau=10^{-3},
\label{eq:numerical-rank}
\end{align}
only when a fixed threshold is useful for comparison.  Section~\ref{subsec:example6}
shows why this value should not be interpreted as a universal rank rule.
Throughout this section, ``all channels'' for total order \(p\) means the
\(p+1\) nonnegative pairs \((m,n)\) with \(m+n=p\) used in
Theorem~\ref{thm:variance_ratio}.

\subsection{Component counting and total-order stabilization}
\label{subsec:example1}

We first consider \(N_c=1,2,3,4\) identical unit-contrast disks of radius
\(0.025\), with centers
\begin{align}
N_c=1:\quad&(0,0),\nonumber\\
N_c=2:\quad&(-0.30,-0.10),\ (0.31,0.13),\nonumber\\
N_c=3:\quad&(-0.37,-0.18),\ (0.02,0.33),\ (0.36,-0.15),\nonumber\\
N_c=4:\quad&(-0.38,-0.25),\ (-0.20,0.30),\
(0.23,-0.31),\ (0.38,0.23).
\label{eq:example1-geometries}
\end{align}
At \(k=2\) we form a \(5\times5\) Hankel matrix, requiring moments through total
order \(p=8\).  The single-row estimate uses \(a_{p0}\) only.  Figure~\ref{fig:example1-clean}
shows the corresponding clean spectra.  The numerical ranks are
\(1,2,3,4\); for the four-disk case,
\begin{align}
\frac{\sigma_4}{\sigma_1}=2.093\times10^{-2},
\qquad
\frac{\sigma_5}{\sigma_1}=1.302\times10^{-4}.
\label{eq:example1-clean-gap}
\end{align}
Thus the finite disk size perturbs, but does not obscure, the rank-four
phase-center structure in the clean data.

\begin{figure}[htbp]
\centering
\includegraphics[width=0.72\textwidth]{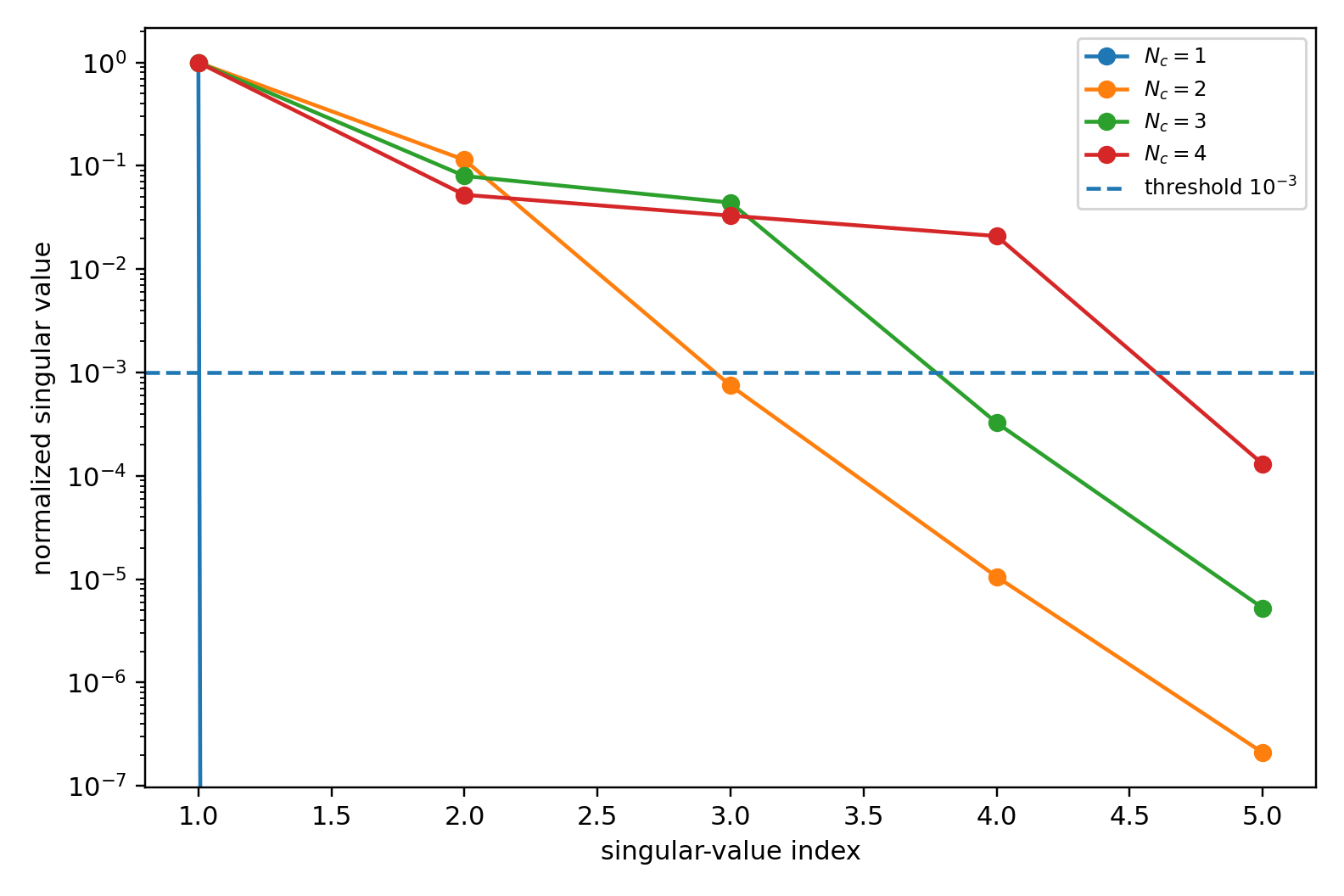}
\caption{Normalized single-row Hankel spectra for the four clean
configurations.  The dashed line is the reference threshold \(10^{-3}\).}
\label{fig:example1-clean}
\end{figure}

We next compare three noisy estimators: the single row at \(k=2\), all
nonnegative total-order channels at \(k=2\), and the same channels at
\(k=1.2,1.4,1.6,1.8,2.0,2.2,2.4\).  For each component count and noise level,
120 independent perturbations are generated.  Figure~\ref{fig:example1-accuracy}
reports the worst correct-count probability over \(N_c=1,\ldots,4\).  The
single-row method falls from perfect recovery at relative noise \(10^{-6}\) to
\(0.067\) at \(10^{-5}\) and zero at \(10^{-4}\).  The fixed-frequency
multichannel estimator remains perfect through \(10^{-4}\), while the
seven-frequency estimator is perfect through \(3\times10^{-4}\) and retains a
worst success probability of \(0.942\) at \(10^{-3}\).

\begin{figure}[htbp]
\centering
\includegraphics[width=0.74\textwidth]{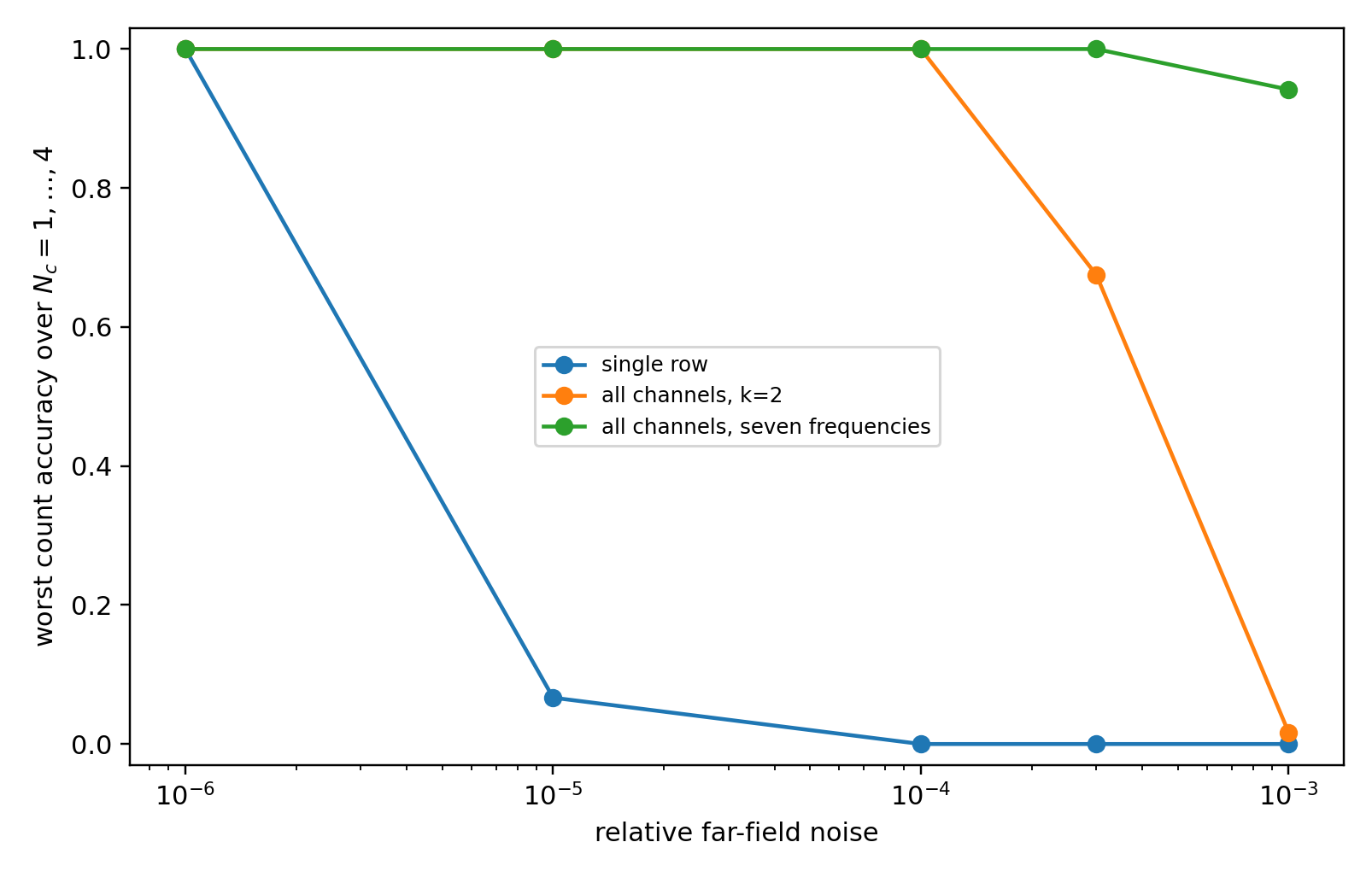}
\caption{Worst component-count accuracy over \(N_c=1,\ldots,4\) versus
relative far-field noise.  Each point uses 120 realizations for each
component count.}
\label{fig:example1-accuracy}
\end{figure}

The fixed-frequency improvement agrees with Theorem~\ref{thm:variance_ratio}.
At the largest retained order, \(p=8\), the predicted standard-deviation ratio
between the endpoint row and the total-order estimator is
\(\sqrt{\binom{16}{8}}=113.446\).  Figure~\ref{fig:example1-noisy-spectrum}
shows the effect on one four-component realization at noise \(10^{-4}\).
The pair \((\sigma_4/\sigma_1,\sigma_5/\sigma_1)\) changes from
\((3.287\times10^{-2},2.000\times10^{-2})\) for the single row to
\((2.285\times10^{-2},2.98\times10^{-4})\) for all channels at \(k=2\) and to
\((2.157\times10^{-2},1.60\times10^{-4})\) for seven frequencies.  The signal
singular value is essentially preserved while the noise-induced tail is
reduced by roughly two orders of magnitude.

\begin{figure}[htbp]
\centering
\includegraphics[width=0.72\textwidth]{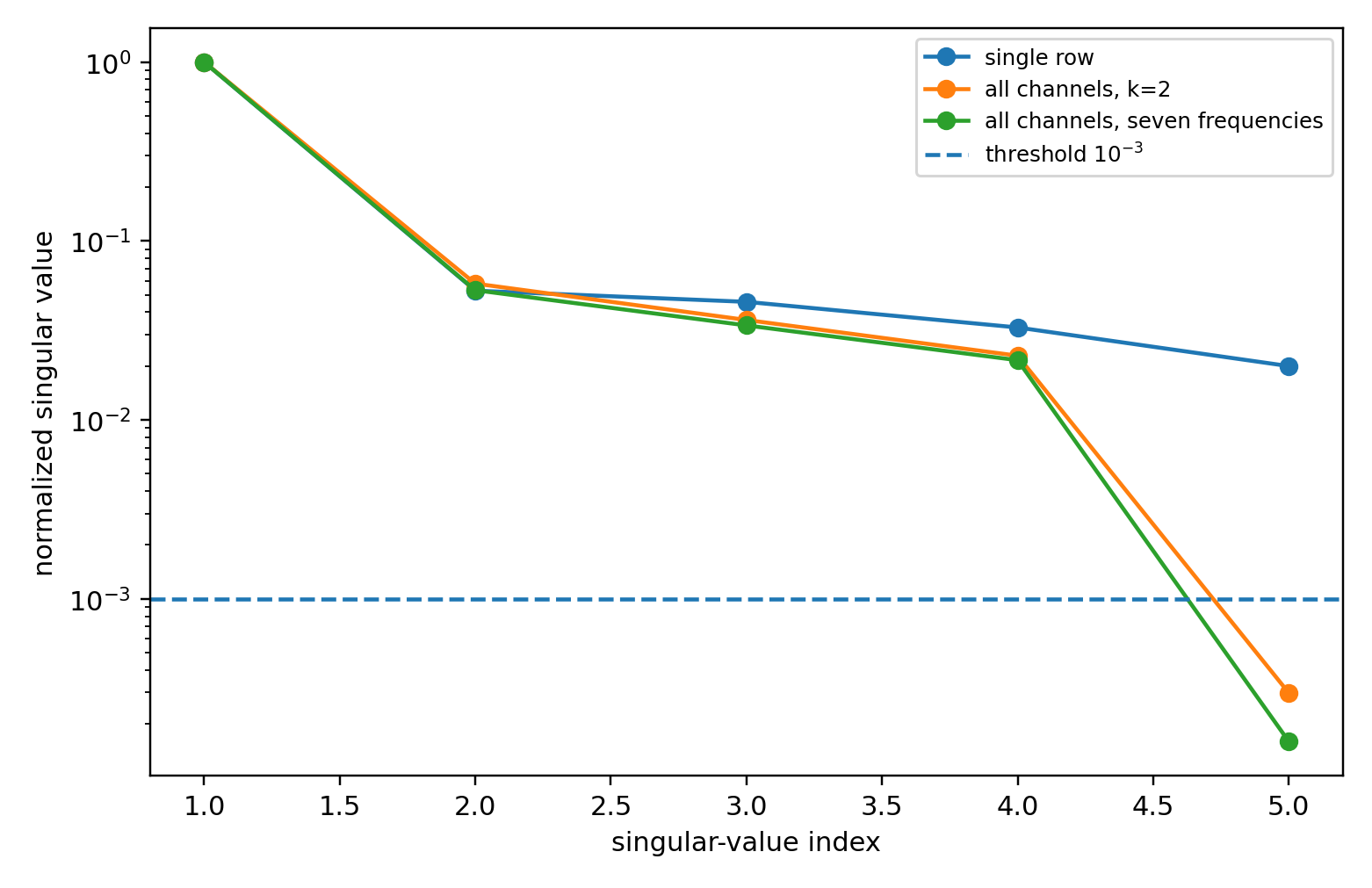}
\caption{Normalized Hankel spectrum for one \(N_c=4\) realization at relative
noise \(10^{-4}\).  Total-order averaging suppresses the fifth singular value
without changing the four-dimensional signal subspace.}
\label{fig:example1-noisy-spectrum}
\end{figure}

\subsection{Phase-center localization}
\label{subsec:example2}

We retain the four-disk geometry in \eqref{eq:example1-geometries} and supply
the correct count \(N_c=4\) to the shifted pencil, thereby separating node
recovery from rank selection.  The three estimates are the single row at
\(k=2\), the leading seven-frequency multichannel fit, and the second-order fit
of \eqref{eq:second_gls}.  For recovered centers \(\widehat z_j\), the error is
\begin{align}
E_{\rm loc}
=
\min_{\pi\in\mathfrak S_{N_c}}
\max_{1\le j\le N_c}
\|z_j-\widehat z_{\pi(j)}\|_2.
\label{eq:example2-location-error}
\end{align}
At each nonzero noise level we use 120 realizations and report the median.

The single-row clean error is \(1.71\times10^{-2}\) and grows to
\(1.80\times10^{-1}\) at noise \(10^{-4}\) and to \(1.02\) at \(10^{-3}\).
The leading multichannel curve is nearly flat: its median is
\(1.85\times10^{-2}\) in clean data, \(1.86\times10^{-2}\) at \(10^{-4}\), and
\(2.24\times10^{-2}\) at \(10^{-3}\).  After noise amplification has been
suppressed, finite-frequency bias becomes visible.  The second-order fit
reduces the clean error to \(2.41\times10^{-3}\) and gives
\(5.81\times10^{-3}\) at \(10^{-4}\), but its extra fitted parameter increases
variance at larger noise; at \(10^{-3}\) its median error is
\(4.54\times10^{-2}\).  This is the bias--variance trade-off described by
Proposition~\ref{prop:second_order}.

\begin{figure}[htbp]
\centering
\includegraphics[width=0.74\textwidth]{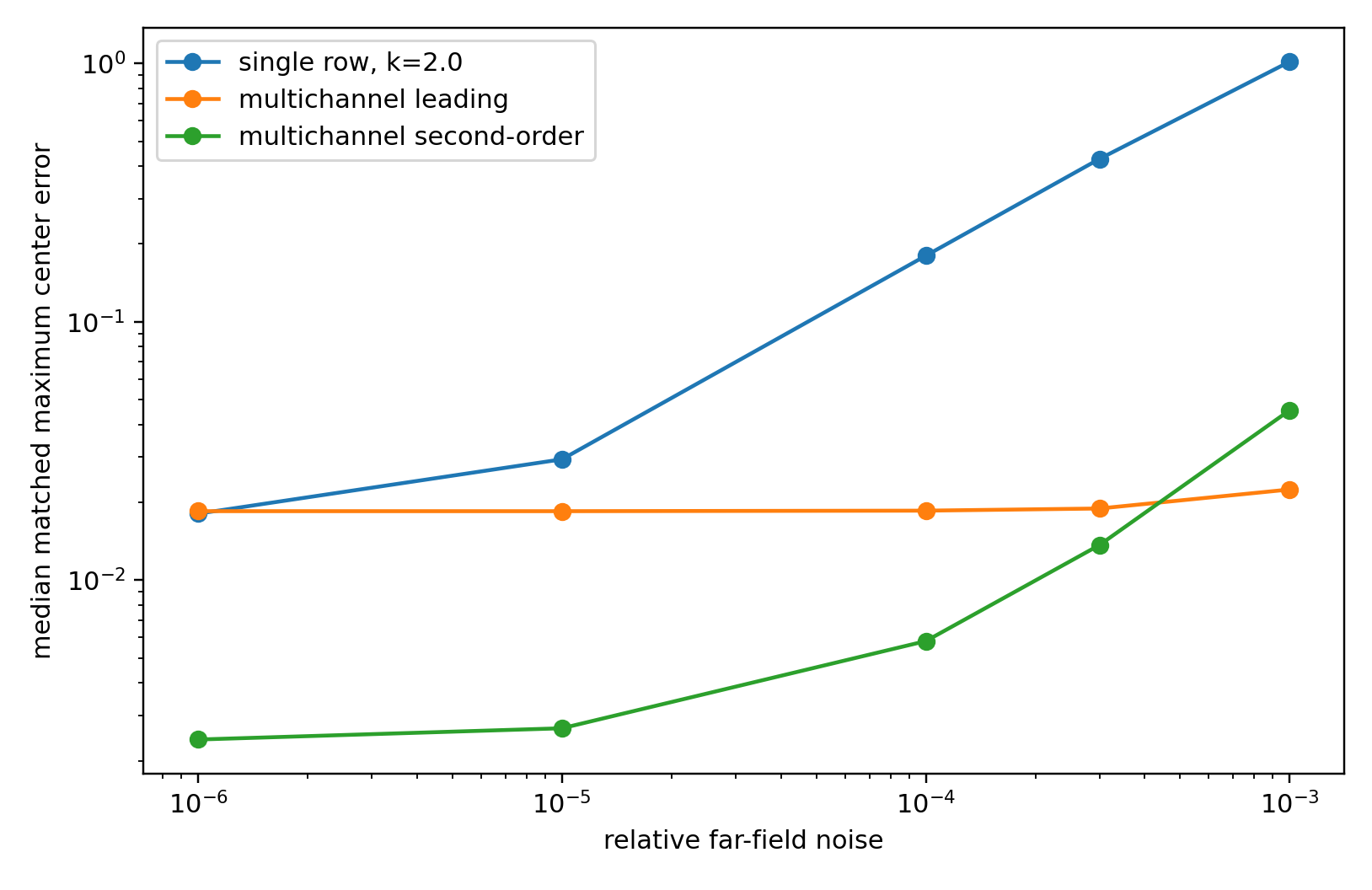}
\caption{Median matched maximum phase-center error.  The correct count
\(N_c=4\) is supplied to the pencil.}
\label{fig:example2-localization}
\end{figure}

Representative reconstructions are shown in Figure~\ref{fig:example2-recovery}.
At noise \(10^{-4}\) the single-row nodes are visibly displaced, whereas both
multichannel reconstructions remain close to the true centers.

\begin{figure}[htbp]
\centering
\begin{subfigure}[t]{0.48\textwidth}
\centering
\includegraphics[width=\textwidth]{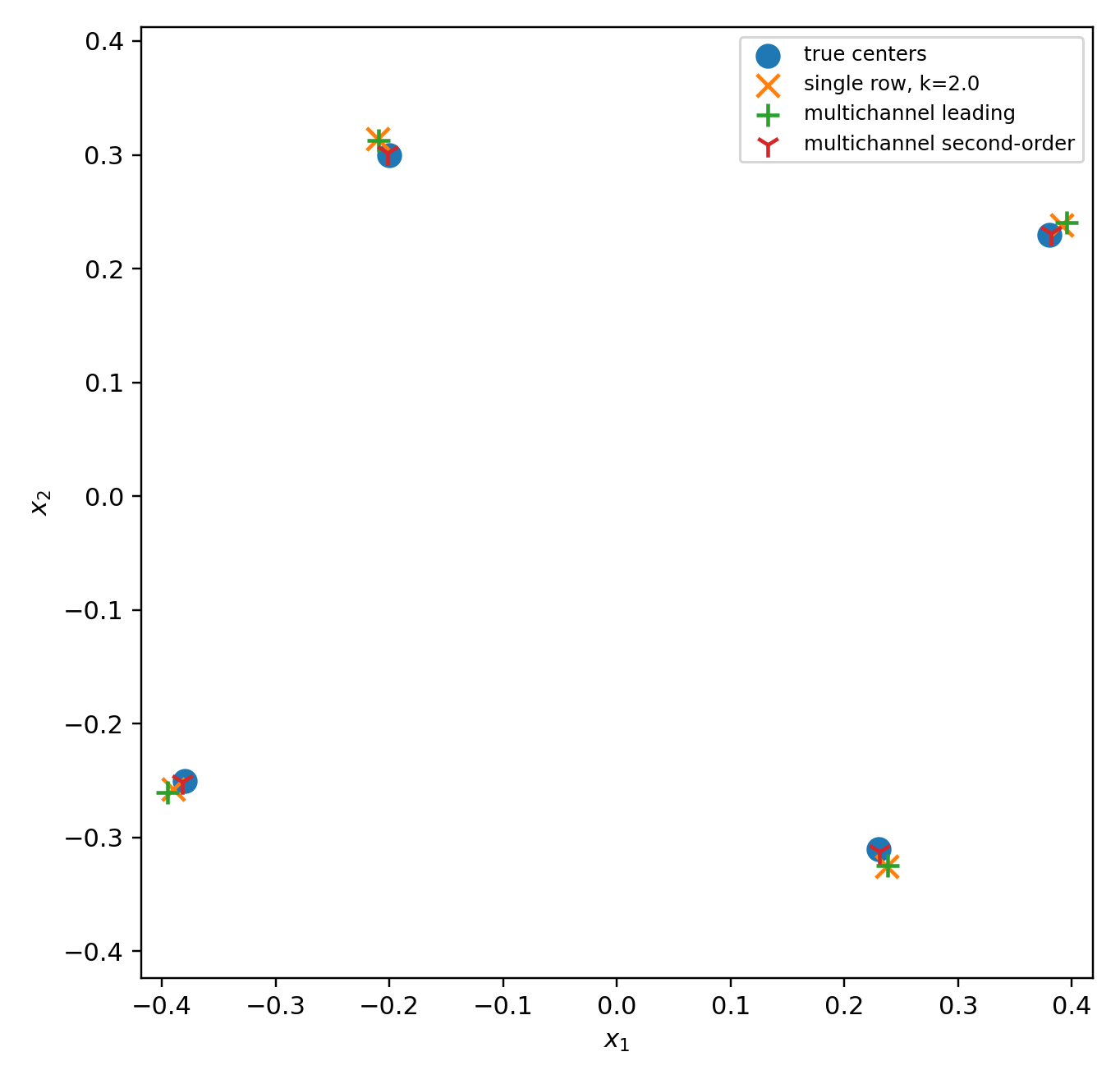}
\caption{No added noise.}
\end{subfigure}
\hfill
\begin{subfigure}[t]{0.48\textwidth}
\centering
\includegraphics[width=\textwidth]{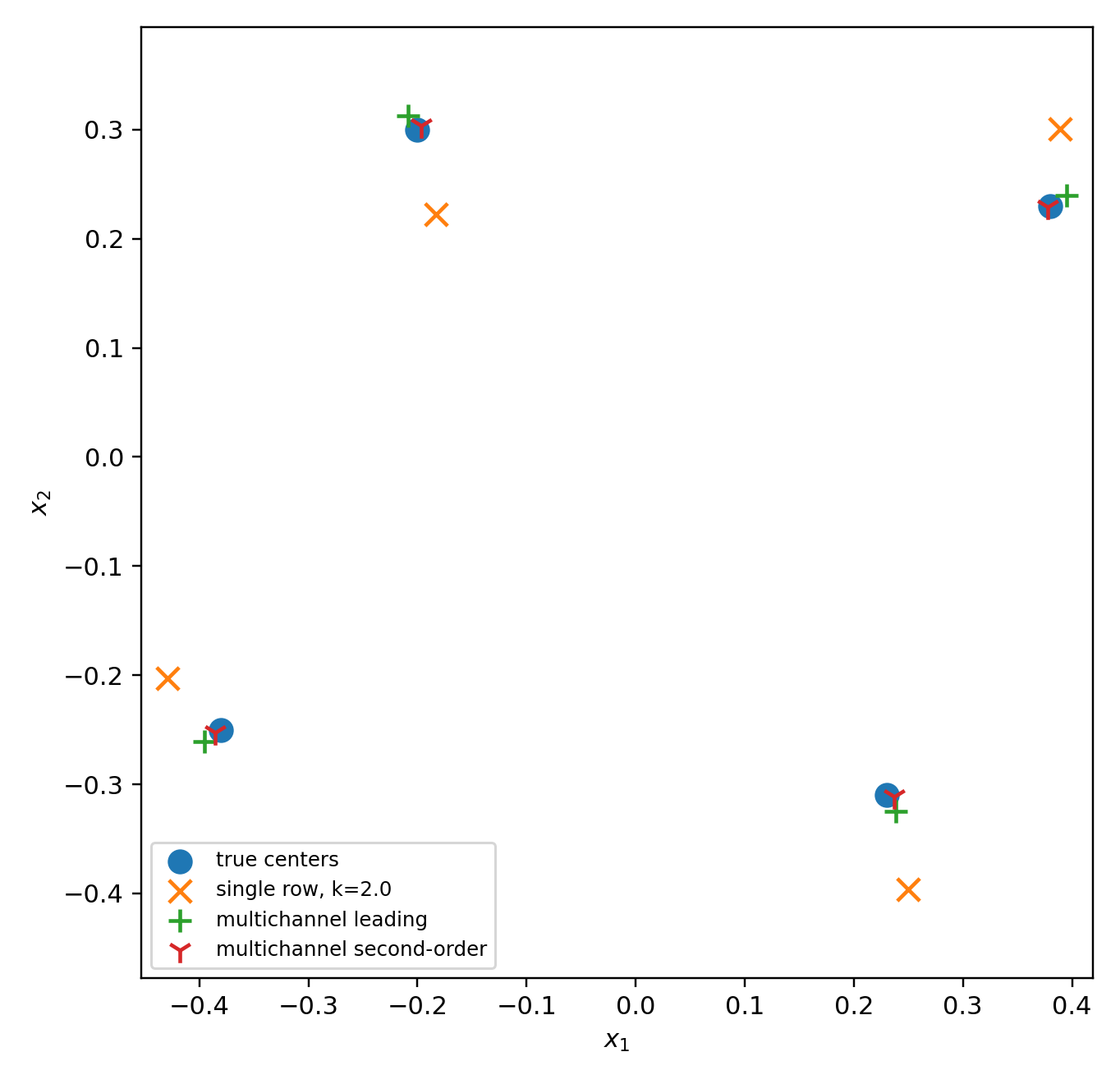}
\caption{Relative noise \(10^{-4}\).}
\end{subfigure}
\caption{Representative phase-center reconstructions for the single-row,
leading multichannel, and second-order multichannel pencils.}
\label{fig:example2-recovery}
\end{figure}

\subsection{Off-center cavity detection}
\label{subsec:example3}

Consider a unit-contrast disk of radius \(0.20\) centered at the origin with a
circular cavity of radius \(0.06\) centered at \((c,0)\).  For a \(5\times5\)
signed Hankel matrix define
\begin{align}
T(c)=\frac{\sigma_2(H_J^{\rm sgn}(c))}{\sigma_1(H_J^{\rm sgn}(c))}.
\label{eq:example3-statistic}
\end{align}
The signed phase-center model has one effective node at \(c=0\) and two when the
material and cavity centers are distinct.

Figure~\ref{fig:example3-clean} shows that the clean statistic is essentially
unchanged by multichannel averaging.  At \(c=0.05\) the row, fixed-frequency
multichannel, and seven-frequency values are
\(2.8231\times10^{-4}\), \(2.8277\times10^{-4}\), and
\(2.8058\times10^{-4}\), respectively; at \(c=0.10\) all three are approximately
\(1.12\times10^{-3}\).  The additional channels therefore lower the noise floor
rather than create a larger clean cavity signal.

\begin{figure}[htbp]
\centering
\includegraphics[width=0.73\textwidth]{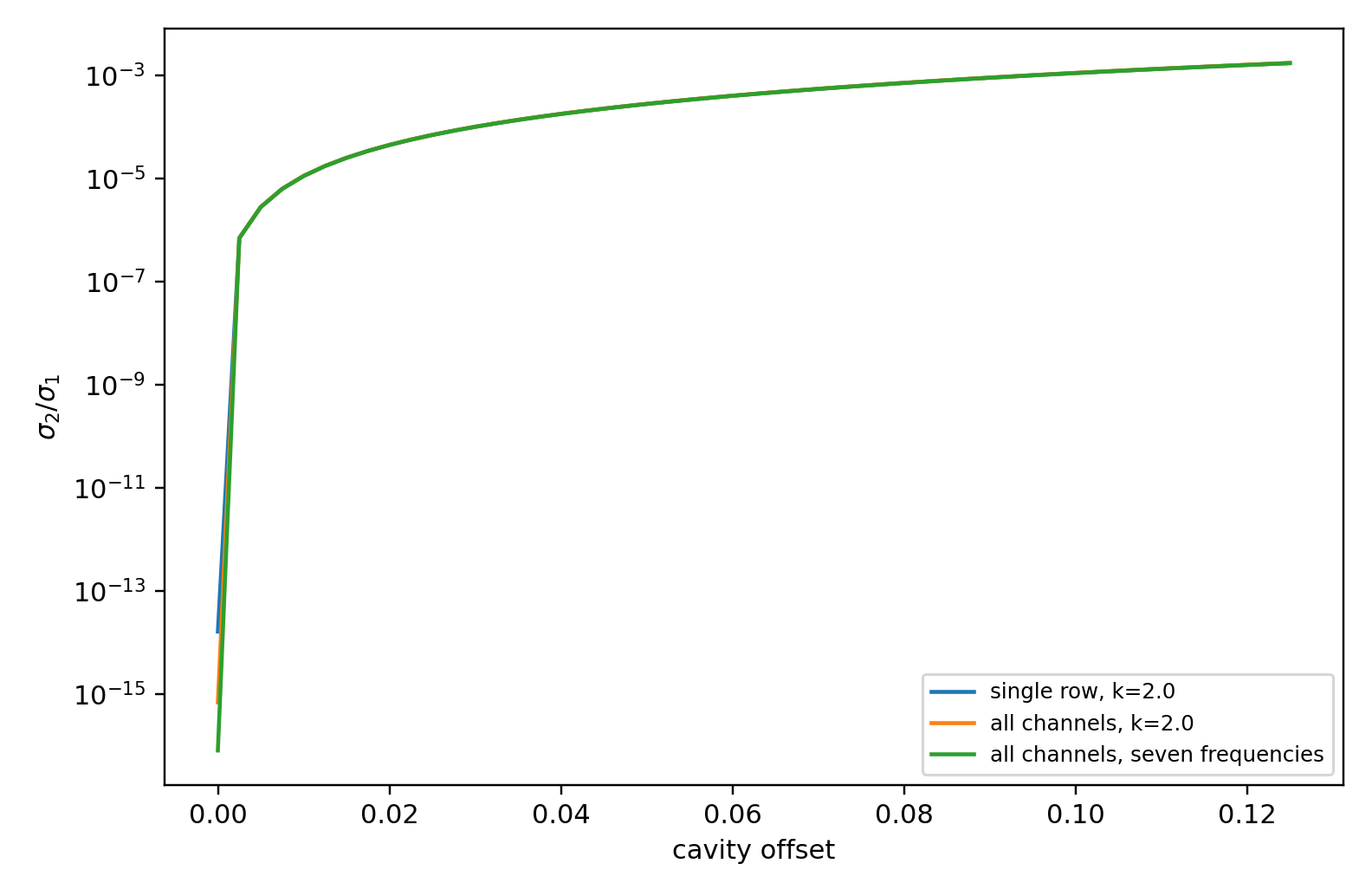}
\caption{Clean signed statistic \(T(c)=\sigma_2/\sigma_1\) versus cavity
offset.  The three estimators give nearly identical clean signals and collapse
to machine precision at \(c=0\).}
\label{fig:example3-clean}
\end{figure}

For noisy data, the concentric case \(c=0\) is used only to calibrate a
method-specific \(99\%\) null quantile.  Detection is declared when
\eqref{eq:example3-statistic} exceeds that threshold.  At the alternative
\(c=0.05\), the single-row detection probabilities stay at the null scale even
for very small noise.  The fixed-frequency multichannel detector has unit
detection through \(3\times10^{-5}\) and then loses separation by \(10^{-4}\).
The seven-frequency detector remains perfect through \(10^{-4}\); at
\(3\times10^{-4}\) and \(10^{-3}\) it returns to the null level.  This behavior
matches Corollary~\ref{cor:signed_variance}: averaging improves detectability
of an existing second signed node but cannot remove the coincident-node
degeneracy.

\begin{figure}[htbp]
\centering
\includegraphics[width=0.73\textwidth]{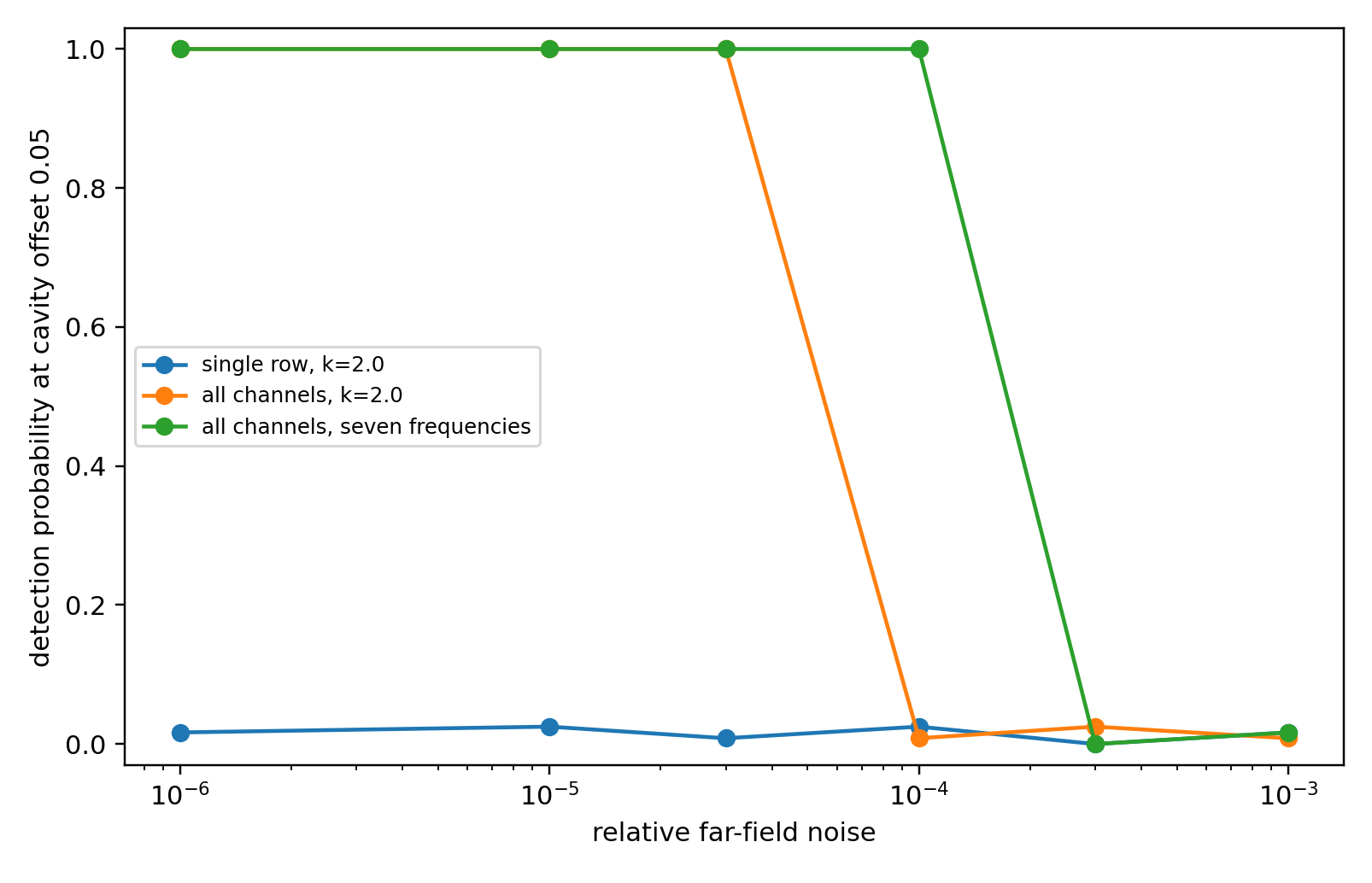}
\caption{Null-calibrated detection probability for an off-center cavity with
\(c=0.05\).  The \(99\%\) null threshold is calibrated separately for each
estimator and noise level.}
\label{fig:example3-detection}
\end{figure}

\subsection{Concentric cavities and radial recovery}
\label{subsec:example4}

We now set \(c=0\) and compare a filled disk of radius \(R=0.20\) with a
concentric annulus having the same outer radius and inner radius \(\rho=0.06\).
Proposition~\ref{prop:zero_total_radial} predicts that all coefficients with
nonzero total order vanish for a centered radial contrast.  Figure~\ref{fig:example4-phase-radial}(a)
confirms the resulting phase-center degeneracy: the normalized spectra are
\((1,0,0)\) for the filled disk and
\((1,1.08\times10^{-17},1.08\times10^{-17})\) for the annulus.  The largest
positive-total-order coefficient for \(p=1,\ldots,5\) is of order \(10^{-18}\)
relative to the zero-order coefficient.

For the radial construction we symmetrize \(a_{m,-m}\) and \(a_{-m,m}\) and use
five frequencies ranging from \(1.2\) to \(2.4\), with reference radius
\(R_*=0.25\).  The normalized radial spectra in
Figure~\ref{fig:example4-phase-radial}(b) are
\begin{align}
\text{filled disk: }&(1,3.38\times10^{-5},7.61\times10^{-7}),\nonumber\\
\text{annulus: }&(1,1.98\times10^{-2},1.06\times10^{-6}).
\label{eq:example4-radial-spectra}
\end{align}
Thus the filled disk is numerically rank one, while the annulus has a clear
second radial singular value.  The radial pencil of
Theorem~\ref{thm:radial_rank} returns
\begin{align}
\widehat\rho=0.060078,
\qquad
\widehat R=0.200006,
\label{eq:example4-radii}
\end{align}
with absolute errors \(7.8\times10^{-5}\) and \(5.5\times10^{-6}\).

\begin{figure}[htbp]
\centering
\begin{subfigure}[t]{0.48\textwidth}
\centering
\includegraphics[width=\textwidth]{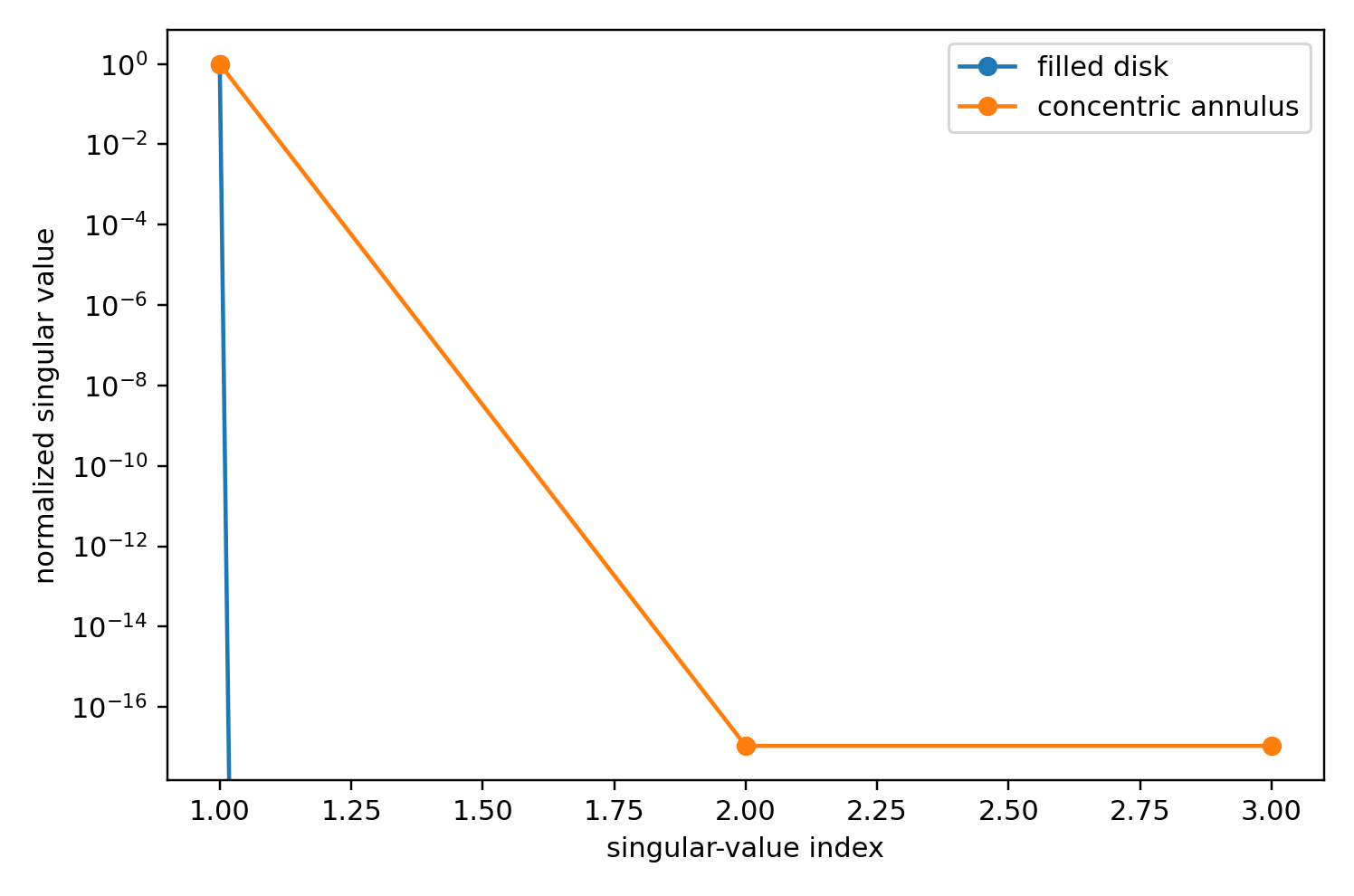}
\caption{Phase-center spectra.}
\end{subfigure}
\hfill
\begin{subfigure}[t]{0.48\textwidth}
\centering
\includegraphics[width=\textwidth]{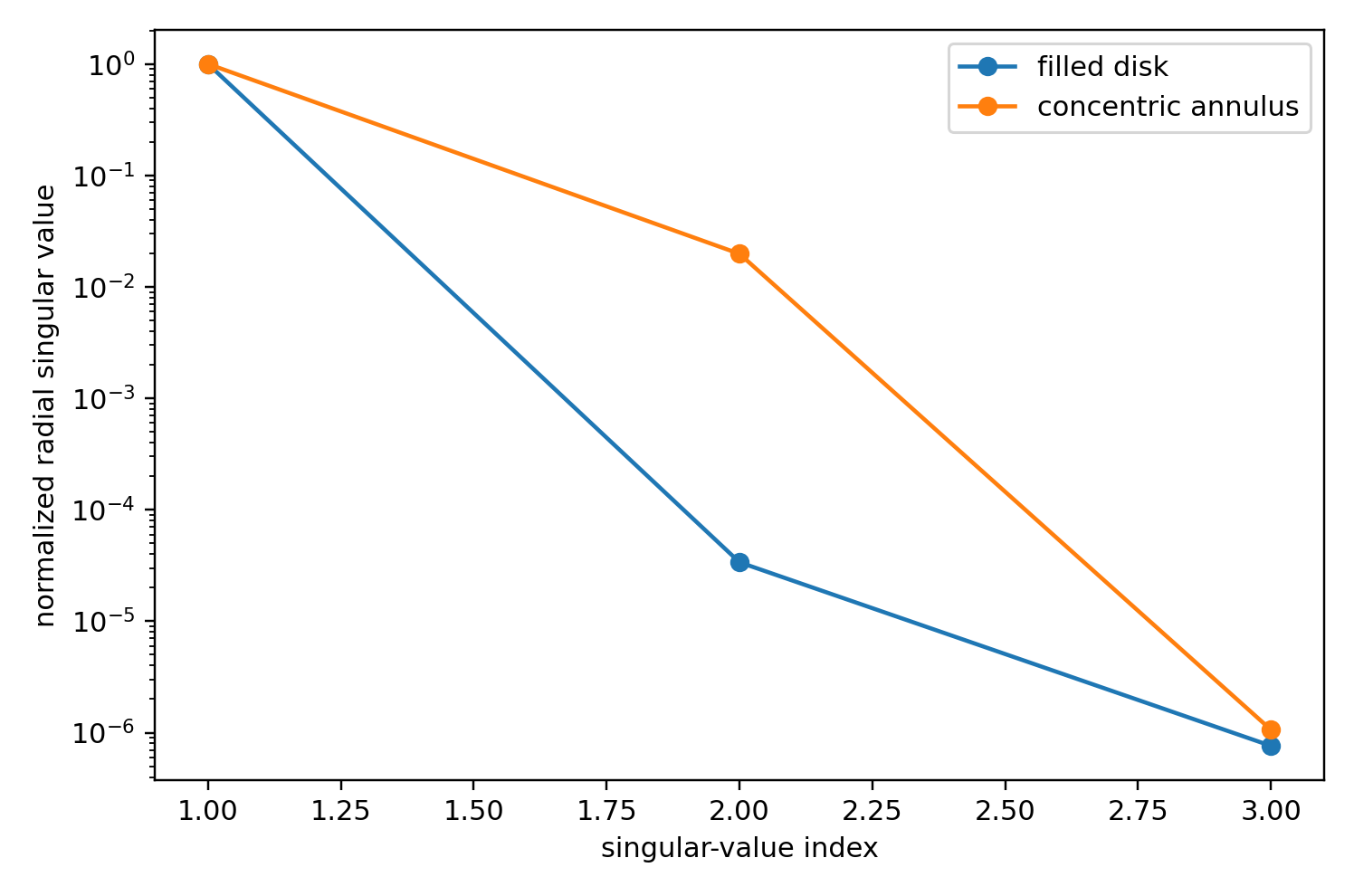}
\caption{Radial spectra.}
\end{subfigure}
\caption{Filled disk versus concentric annulus.  Positive total order cannot
separate the concentric cavity, whereas zero total order produces a second
radial node.}
\label{fig:example4-phase-radial}
\end{figure}

The finite-frequency radial moments are estimated by the weighted intercept
fit of Section~\ref{subsec:radial_stability}.  To check its deterministic
order, all five frequencies are multiplied by a common scale \(s\) and the
radial moment of order \(m=2\) is reconstructed.  The measured log--log slope
is \(3.99532\), in agreement with the \(O(s^4)\) intercept error implied by
\eqref{eq:radial_regression_remainder}--\eqref{eq:radial_bias_bound}.

\begin{figure}[htbp]
\centering
\includegraphics[width=0.69\textwidth]{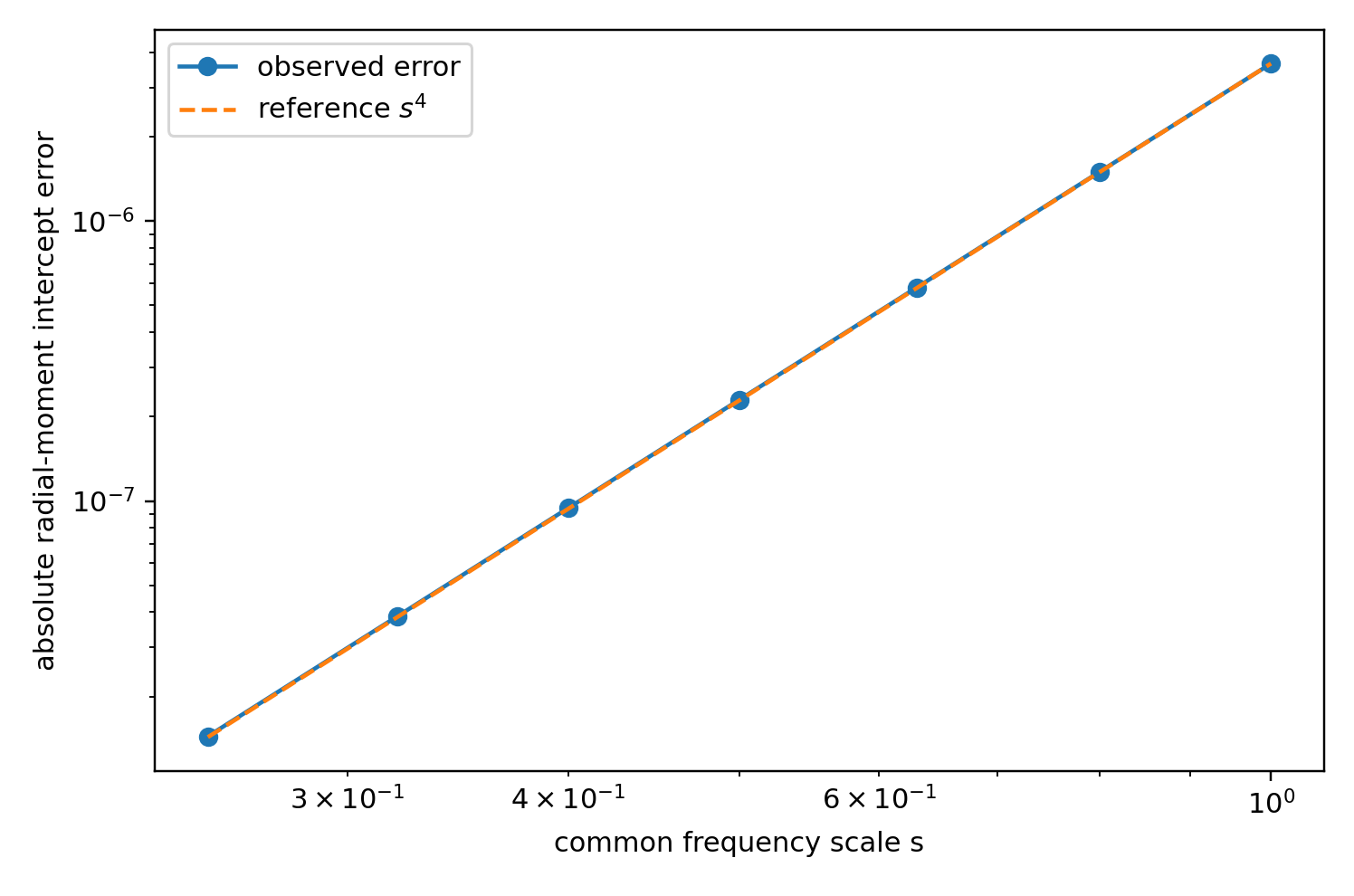}
\caption{Absolute radial-moment intercept error under a common frequency
scaling.  The observed slope \(3.99532\) agrees with the predicted fourth-order
remainder after the quadratic term is fitted.}
\label{fig:example4-bias}
\end{figure}

Finally, define the radial detection statistic
\(T_{\rm rad}=\sigma_2(H_J^{\rm rad})/\sigma_1(H_J^{\rm rad})\).
At each noise level a \(99\%\) null threshold is calibrated from 200 noisy
filled-disk realizations and tested on 120 independent annulus realizations.
Figure~\ref{fig:example4-detection} shows unit detection through relative
noise \(10^{-4}\).  At \(3\times10^{-4}\) the threshold rises to
\(2.40\times10^{-2}\) while the median annulus statistic is
\(1.92\times10^{-2}\), and the detection probability falls to \(0.20\); at
\(10^{-3}\) it is \(0.017\).  The transition is consistent with the gap condition
in Corollary~\ref{cor:radial_rank_stability}.

\begin{figure}[htbp]
\centering
\includegraphics[width=0.70\textwidth]{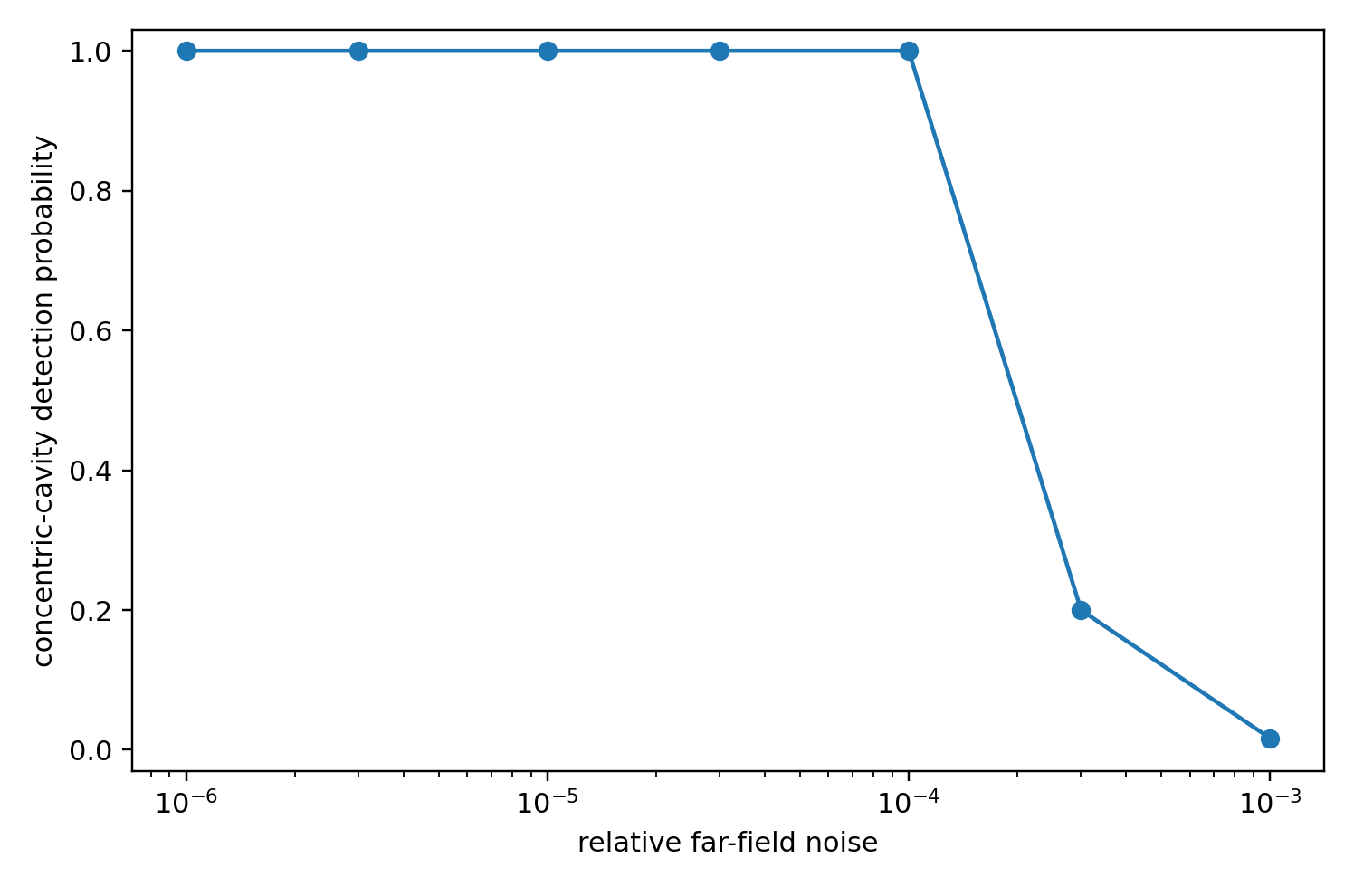}
\caption{Null-calibrated detection probability for the concentric annulus
using the radial Fourier--Hankel statistic.}
\label{fig:example4-detection}
\end{figure}

\subsection{Multiple concentric interfaces}
\label{subsec:example5}

To test the higher-rank radial result, consider
\begin{align}
q_0(r)
=
\mathbf 1_{[0,0.40)}(r)
-
\mathbf 1_{[0,0.25)}(r)
+
\mathbf 1_{[0,0.10)}(r),
\label{eq:example5-profile}
\end{align}
which has interfaces at \(R_1=0.10\), \(R_2=0.25\), and \(R_3=0.40\).
Using seven frequencies from \(0.8\) to \(2.0\) and \(R_*=0.45\), the normalized
singular values of the reconstructed \(4\times4\) radial Hankel matrix are
\begin{align}
1,
\qquad 5.74\times10^{-2},
\qquad 1.09\times10^{-3},
\qquad 3.80\times10^{-6}.
\label{eq:example5-spectrum}
\end{align}
The first three singular values form the signal subspace predicted by
Theorem~\ref{thm:radial_rank}.  The shifted pencil returns
\begin{align}
\widehat R_1=0.099199,
\qquad
\widehat R_2=0.250675,
\qquad
\widehat R_3=0.400021,
\label{eq:example5-recovered-radii}
\end{align}
so the largest absolute radius error is \(8.01\times10^{-4}\).

\begin{figure}[htbp]
\centering
\includegraphics[width=0.68\textwidth]{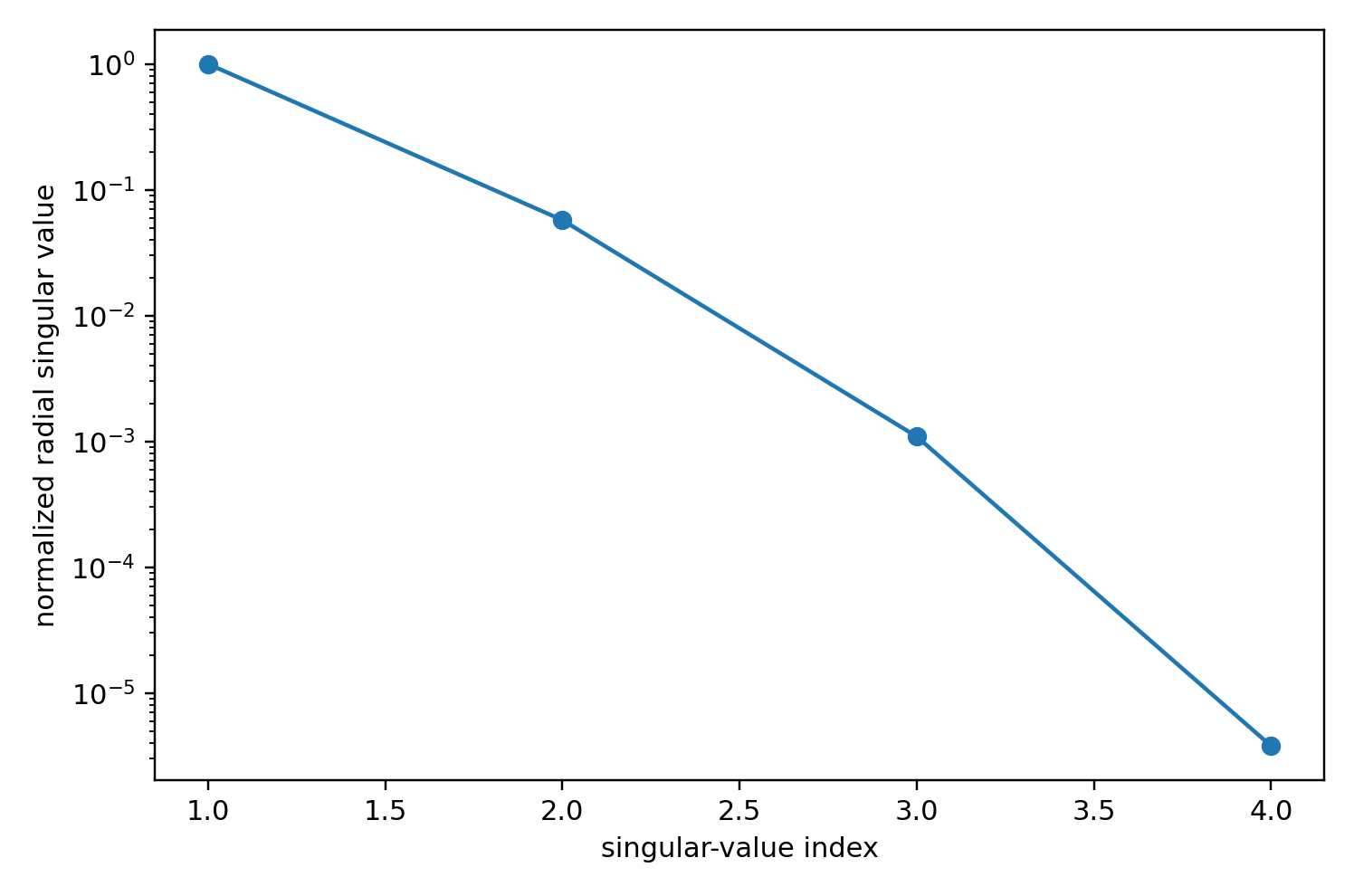}
\caption{Normalized radial Hankel spectrum for the three-interface profile
\eqref{eq:example5-profile}.}
\label{fig:example5-spectrum}
\end{figure}

\subsection{Intrinsic Hankel conditioning}
\label{subsec:example6}

The preceding procedures reduce stochastic and finite-frequency errors but
cannot change the exact Hankel gap.  To isolate this effect, we generate 1000
exact four-node phase-center sequences with centers in \([-0.45,0.45]^2\) and
minimum separation \(0.22\).  No measurement noise or Bessel approximation is
used.  For each configuration we record
\begin{align}
G=\frac{\sigma_4(H_J)}{\sigma_1(H_J)}.
\label{eq:example6-gap}
\end{align}
The empirical \(10\%\) quantile, median, and \(90\%\) quantile are
\(2.61\times10^{-4}\), \(1.02\times10^{-3}\), and \(4.34\times10^{-3}\),
respectively.  Only \(51.2\%\) of the exact rank-four matrices lie above the
fixed reference threshold \(10^{-3}\).

\begin{figure}[htbp]
\centering
\includegraphics[width=0.70\textwidth]{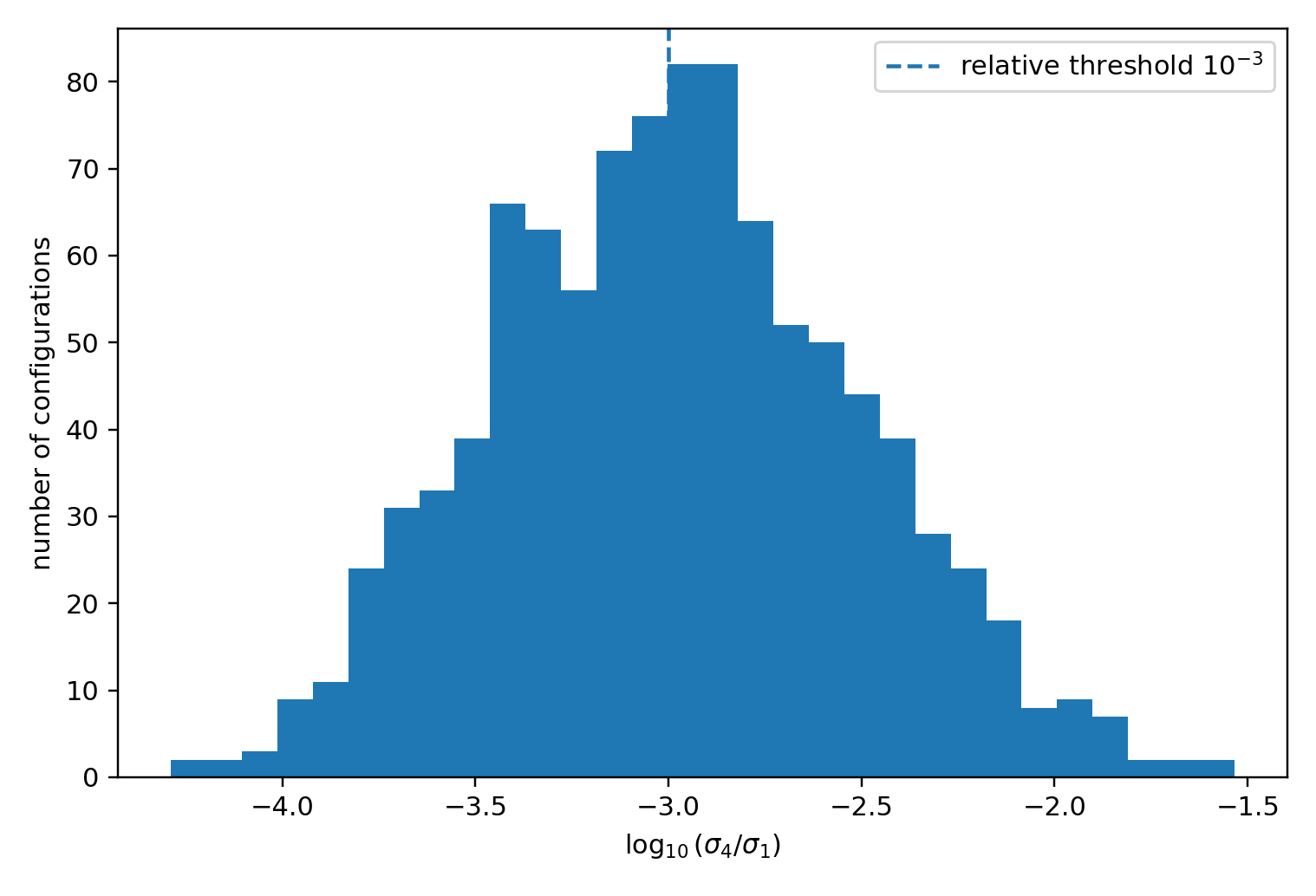}
\caption{Distribution of the exact normalized fourth singular value over
1000 four-node configurations.  The dashed line marks \(10^{-3}\).}
\label{fig:example6-gap}
\end{figure}

This experiment is the numerical counterpart of
Theorem~\ref{thm:rank_stability}.  Multichannel estimation can reduce the
perturbation applied to the Hankel matrix, but the admissible threshold still
depends on the smallest nonzero singular value of the exact configuration.

\subsection{Full-wave Helmholtz data beyond the Born approximation}
\label{subsec:example7}

The preceding experiments isolate the algebraic and stochastic mechanisms
using Born data.  We now keep the reconstruction unchanged but generate the far
field from the full Helmholtz model \eqref{eq:helmholtz}.  This experiment is a
model-mismatch test: no result above asserts an exact finite-rank
Fourier--Hankel structure for nonlinear multiple scattering.

We use the four centers in \eqref{eq:example1-geometries}, increase the disk
radius to \(0.05\), and set
\begin{align}
q_\alpha(x)
&=
\alpha\sum_{j=1}^{4}\mathbf 1_{D_j}(x),
\qquad
\alpha>0.
\label{eq:example7-contrast}
\end{align}
The frequency is fixed at \(k=2\).  For each incident direction, the total
field is computed from the Lippmann--Schwinger equation equivalent to
\eqref{eq:helmholtz},
\begin{align}
u(x,d;k)
&=
u^i(x,d;k)
+k^2\int_D \Phi_k(x-y)q_\alpha(y)u(y,d;k)\,\dd y,
\label{eq:example7-ls}
\end{align}
where \(\Phi_k=(\ii/4)H_0^{(1)}(k|\cdot|)\).  The volume integral is discretized
on cells of width \(h=0.006\) inside the four disks, giving 860 unknowns.  The
logarithmic self interaction is integrated by an equal-area disk correction.
The full-wave far field is then evaluated from \eqref{eq:farfield_exact}.  A
refinement from \(h=0.008\) to \(h=0.006\) changes the reported full-wave/Born
discrepancy by less than \(0.4\%\) and the fifth normalized singular value by
less than \(1.7\%\) for the representative contrasts \(\alpha=1,8,16,20\).

To quantify the departure from the model used by the reconstruction, define
\begin{align}
E_{\rm Born}(\alpha)
&=
\frac{\|U_\alpha^\infty-U_{\alpha,{\rm Born}}^\infty\|_F}
{\|U_\alpha^\infty\|_F}.
\label{eq:example7-born-discrepancy}
\end{align}
The same fixed-frequency total-order estimator as in
\eqref{eq:all_estimator} is applied directly to the full-wave data, with no
nonlinear correction.  A \(5\times5\) Hankel matrix is formed from moments
through order eight.  Table~\ref{tab:example7-fullwave} reports the fourth and
fifth normalized singular values and the numerical rank from
\eqref{eq:numerical-rank}.  The localization error is computed with the true
count \(N_c=4\) supplied to the pencil, so that node bias is separated from
rank-selection failure.

\begin{table}[htbp]
\centering
\caption{Born-derived Fourier--Hankel reconstruction applied to full-wave
Helmholtz data.  The full-wave/Born discrepancy is
\eqref{eq:example7-born-discrepancy}; localization uses the true count
\(N_c=4\).}
\label{tab:example7-fullwave}
\begin{tabular}{ccccc}
\toprule
\(\alpha\)
& \(E_{\rm Born}\)
& \(\sigma_4/\sigma_1\)
& \(\sigma_5/\sigma_1\)
& \(\operatorname{rank}_{10^{-3}}\; /\; E_{\rm loc}\)\\
\midrule
0.25 & 0.0056 & \(2.31\times10^{-2}\) & \(2.24\times10^{-4}\) & \(4\;/\;0.0262\)\\
1    & 0.0223 & \(2.33\times10^{-2}\) & \(2.37\times10^{-4}\) & \(4\;/\;0.0272\)\\
4    & 0.0891 & \(2.40\times10^{-2}\) & \(3.15\times10^{-4}\) & \(4\;/\;0.0327\)\\
8    & 0.1777 & \(2.52\times10^{-2}\) & \(4.78\times10^{-4}\) & \(4\;/\;0.0431\)\\
12   & 0.2659 & \(2.69\times10^{-2}\) & \(7.01\times10^{-4}\) & \(4\;/\;0.0564\)\\
16   & 0.3531 & \(2.91\times10^{-2}\) & \(9.95\times10^{-4}\) & \(4\;/\;0.0716\)\\
20   & 0.4391 & \(3.18\times10^{-2}\) & \(1.38\times10^{-3}\) & \(5\;/\;0.0884\)\\
\bottomrule
\end{tabular}
\end{table}

\begin{figure}[htbp]
\centering
\includegraphics[width=0.72\textwidth]{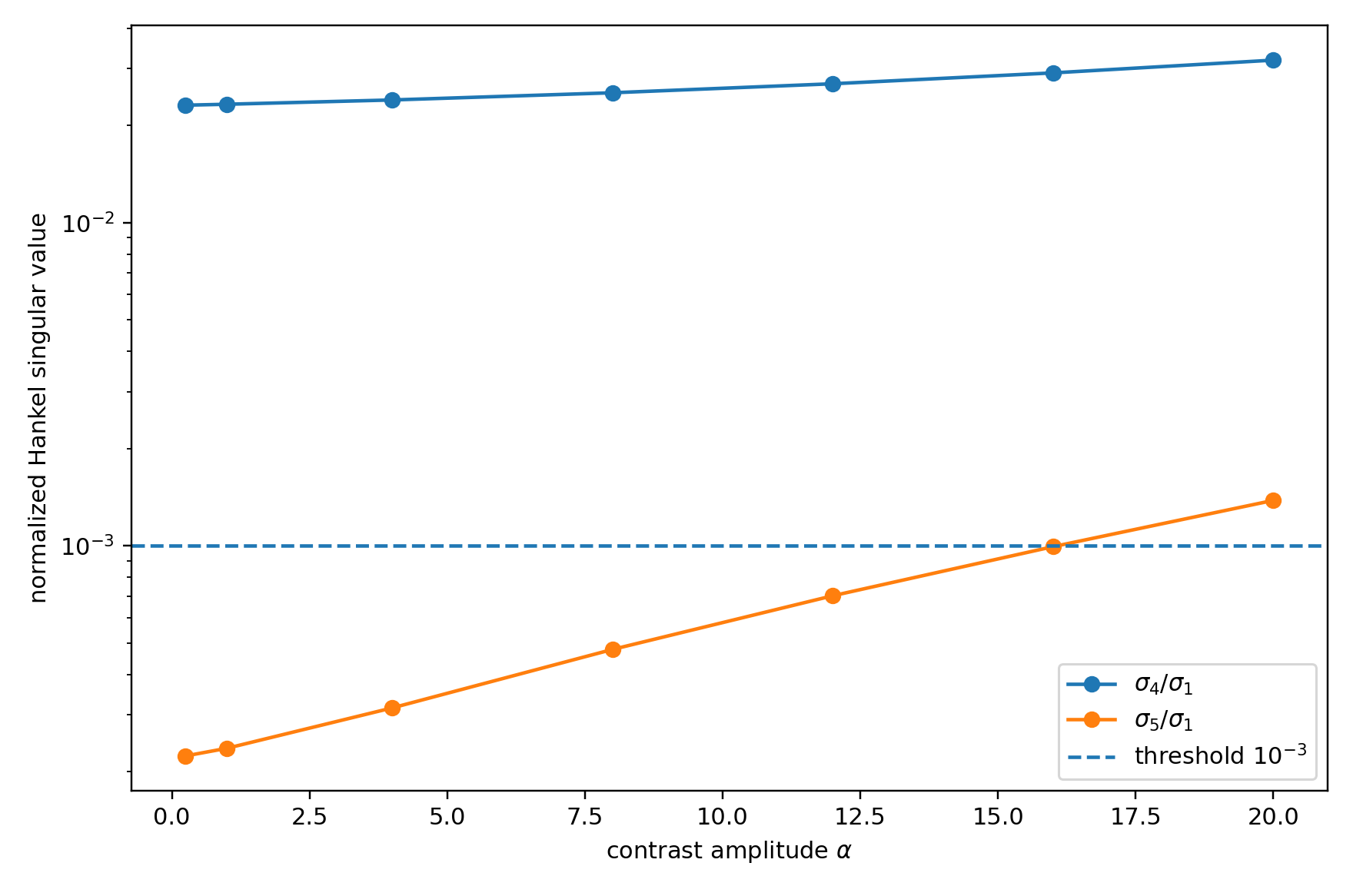}
\caption{Fourth and fifth normalized Hankel singular values obtained by
applying the Born-derived total-order reduction to full-wave Helmholtz data.
The fifth singular value tracks the growing Hankel tail associated with model
mismatch.  It remains below the reference threshold through \(\alpha=16\)
and crosses it at \(\alpha=20\).}
\label{fig:example7-fullwave}
\end{figure}

The results distinguish the exact Born theory from the observed robustness of
the reconstruction.  At \(\alpha=8\), the full-wave data differ from their Born
counterparts by about \(18\%\), yet the four-dimensional signal subspace
remains clearly separated from the fifth singular value.  Even at
\(\alpha=16\), where the discrepancy is about \(35\%\), the fixed threshold
still returns four components, although the localization bias has increased.
At \(\alpha=20\), the fifth singular value crosses \(10^{-3}\) and the same
rank rule returns five components.  Thus the Born-derived reduction remains
numerically informative well beyond an infinitesimal model mismatch, but the
nonlinear scattering error eventually appears as an additional Hankel tail
and cannot be removed by multichannel averaging.  This is consistent with the scope of the
analysis: the extra channels reduce measurement noise, whereas full-wave model
error is a separate deterministic perturbation.

\section{Conclusion}
\label{sec:conclusion}

The two-angle Fourier matrix supports two distinct finite-moment reductions.
Positive total order carries phase-center information.  Replacing a single
row coefficient by the nonnegative total-order channel ensemble reduces the
variance of the recovered high-order moments and markedly improves counting,
localization, and signed off-center cavity detection.  Zero total order
retains radial Bessel information.  For a finitely layered angular average,
multifrequency extrapolation yields a Hankel sequence in the squared interface
radii, allowing concentric interfaces to be counted and localized even when
the phase-center rank is blind to the cavity.

The analysis also identifies what the stabilization cannot change.  The
recoverable rank is controlled by the smallest nonzero singular value of the
exact Hankel matrix, and the node error is amplified by Vandermonde
conditioning.  Additional channels reduce the perturbation of the moments;
they do not create a geometry-independent singular gap.  Likewise, the radial
construction resolves interfaces of a finitely layered angular average; it does
not recover the shape of an arbitrary same-centered cavity.
The full-wave experiment shows that the Born-derived phase-center rank can
remain useful under appreciable model mismatch, but it also shows where the
nonlinear scattering residual becomes large enough to generate a spurious
Hankel direction.  A corresponding full-wave error analysis, and its extension
to the radial reduction, remain open questions.
\bibliographystyle{plain}
\bibliography{myreference_final}

@book{CakoniColton2014,
	address = {New York},
	author = {F. Cakoni and D. Colton},
	doi = {10.1007/978-1-4614-8827-9},
	publisher = {Springer},
	series = {Applied Mathematical Sciences},
	title = {A Qualitative Approach to Inverse Scattering Theory},
	volume = {188},
	year = {2014}}

@book{ColtonKress2019,
	author = {D. Colton and R. Kress},
	edition = {4},
	publisher = {Springer},
	title = {Inverse Acoustic and Electromagnetic Scattering Theory},
	year = {2019}}

@book{Kirsch2011,
	address = {New York},
	author = {A. Kirsch},
	doi = {10.1007/978-1-4419-8474-6},
	edition = {2},
	publisher = {Springer},
	series = {Applied Mathematical Sciences},
	title = {An Introduction to the Mathematical Theory of Inverse Problems},
	volume = {120},
	year = {2011}}

@book{KirschGrinberg2008,
	address = {Oxford},
	author = {A. Kirsch and N. Grinberg},
	publisher = {Oxford University Press},
	title = {The Factorization Method for Inverse Problems},
	year = {2008}}

@article{Potthast2006,
	author = {R. Potthast},
	doi = {10.1088/0266-5611/22/2/R01},
	journal = {Inverse Problems},
	number = {2},
	pages = {R1--R47},
	title = {A survey on sampling and probe methods for inverse problems},
	volume = {22},
	year = {2006}}

@article{Ito2012,
	author = {K. Ito and B. Jin and J. Zou},
	doi = {10.1088/0266-5611/28/2/025003},
	journal = {Inverse Problems},
	number = {2},
	pages = {025003},
	title = {A direct sampling method to an inverse medium scattering problem},
	volume = {28},
	year = {2012}}

@article{Li2013,
	author = {J. Li and J. Zou},
	doi = {10.3934/ipi.2013.7.757},
	journal = {Inverse Problems and Imaging},
	number = {3},
	pages = {757--775},
	title = {A direct sampling method for inverse scattering using far-field data},
	volume = {7},
	year = {2013}}

@article{Liu2018,
	author = {J. Liu and J. Sun},
	doi = {10.1088/1361-6420/aaca90},
	journal = {Inverse Problems},
	number = {8},
	pages = {085007},
	title = {Extended sampling method in inverse scattering},
	volume = {34},
	year = {2018}}

@article{AmmariIakovlevaMoskow2003,
	author = {H. Ammari and E. Iakovleva and S. Moskow},
	doi = {10.1137/S0036141001392785},
	journal = {SIAM Journal on Mathematical Analysis},
	number = {4},
	pages = {882--900},
	title = {Recovery of Small Inhomogeneities from the Scattering Amplitude at a Fixed Frequency},
	volume = {34},
	year = {2003}}

@book{AmmariKang2004,
	address = {Berlin},
	author = {H. Ammari and H. Kang},
	doi = {10.1007/b98245},
	publisher = {Springer},
	series = {Lecture Notes in Mathematics},
	title = {Reconstruction of Small Inhomogeneities from Boundary Measurements},
	volume = {1846},
	year = {2004}}

@book{AmmariKang2007,
	address = {New York},
	author = {H. Ammari and H. Kang},
	doi = {10.1007/978-0-387-71566-7},
	publisher = {Springer},
	series = {Applied Mathematical Sciences},
	title = {Polarization and Moment Tensors: With Applications to Inverse Problems and Effective Medium Theory},
	volume = {162},
	year = {2007}}

@article{Cheney2001,
	author = {M. Cheney},
	journal = {Inverse Problems},
	pages = {591--595},
	title = {The linear sampling method and the MUSIC algorithm},
	volume = {17},
	year = {2001}}

@techreport{Devaney2000,
	author = {A. J. Devaney},
	institution = {Northeastern University},
	title = {Super-resolution processing of multi-static data using time reversal and {MUSIC}},
	type = {Preprint},
	year = {2000}}

@article{FazliNakhkash2012,
	author = {R. Fazli and M. Nakhkash},
	doi = {10.1088/0266-5611/28/7/075012},
	journal = {Inverse Problems},
	number = {7},
	pages = {075012},
	title = {An analytical approach to estimate the number of small scatterers in 2D inverse scattering problems},
	volume = {28},
	year = {2012}}

@article{QuinlanBarbotLarzabal2006,
	author = {A. Quinlan and J.-P. Barbot and P. Larzabal},
	doi = {10.1121/1.2180207},
	journal = {The Journal of the Acoustical Society of America},
	number = {4},
	pages = {2220--2225},
	title = {Automatic determination of the number of targets present when using the time reversal operator},
	volume = {119},
	year = {2006}}

@article{GriesmaierHankeSylvester2014,
	author = {R. Griesmaier and M. Hanke and J. Sylvester},
	doi = {10.1137/120891381},
	journal = {SIAM Journal on Numerical Analysis},
	number = {1},
	pages = {343--362},
	title = {Far Field Splitting for the Helmholtz Equation},
	volume = {52},
	year = {2014}}

@article{Hanke2012,
	author = {M. Hanke},
	doi = {10.1137/110823985},
	journal = {SIAM Journal on Imaging Sciences},
	number = {1},
	pages = {465--482},
	title = {One Shot Inverse Scattering via Rational Approximation},
	volume = {5},
	year = {2012}}

@article{GriesmaierHanke2025,
	author = {R. Griesmaier and M. Hanke},
	doi = {10.1137/24M1709753},
	journal = {SIAM Journal on Imaging Sciences},
	number = {2},
	pages = {881--905},
	title = {One Shot Inverse Scattering Revisited},
	volume = {18},
	year = {2025}}

@article{Bao2015,
	author = {G. Bao and P. Li and J. Lin and F. Triki},
	doi = {10.1088/0266-5611/31/9/093001},
	journal = {Inverse Problems},
	number = {9},
	pages = {093001},
	title = {Inverse scattering problems with multi-frequencies},
	volume = {31},
	year = {2015}}

@article{Deng2026_a,
	author = {Z. Deng and A. Qian and X. Yang},
	journal = {arXiv preprint arXiv:2606.15065},
	title = {A {Hankel} determinant zero-order principle for source counting in an inverse heat point-source problem},
	year = {2026}}

@article{Deng2026_b,
	author = {Z. Deng and X. Yang and A. Qian},
	journal = {arXiv preprint arXiv:2606.21815},
	title = {A moment-{Hankel} rank method for identifying the number of point sources in the heat equation},
	year = {2026}}

@article{Hua1990,
	author = {Y. Hua and T. K. Sarkar},
	journal = {IEEE Transactions on Acoustics, Speech, and Signal Processing},
	number = {5},
	pages = {814--824},
	title = {Matrix pencil method for estimating parameters of exponentially damped/undamped sinusoids in noise},
	volume = {38},
	year = {1990}}

@article{Potts2010,
	author = {D. Potts and M. Tasche},
	journal = {Signal Processing},
	number = {5},
	pages = {1631--1642},
	title = {Parameter estimation for exponential sums by approximate {Prony} method},
	volume = {90},
	year = {2010}}

@article{Potts2013,
	author = {D. Potts and M. Tasche},
	journal = {Electronic Transactions on Numerical Analysis},
	pages = {204--224},
	title = {Parameter estimation for multivariate exponential sums},
	volume = {40},
	year = {2013}}

@article{Peter2013,
	author = {T. Peter and G. Plonka},
	journal = {Inverse Problems},
	number = {2},
	pages = {025001},
	title = {A generalized {Prony} method for reconstruction of sparse sums of eigenfunctions of linear operators},
	volume = {29},
	year = {2013}}

@article{Kunis2019,
	author = {S. Kunis and H. M. M\"oller and U. von der Ohe},
	journal = {SMAI Journal of Computational Mathematics},
	pages = {87--97},
	title = {{Prony}'s method on the sphere},
	volume = {S5},
	year = {2019}}

@article{CuytLee2024,
	author = {A. Cuyt and W.-s. Lee},
	doi = {10.1007/s11075-023-01564-3},
	journal = {Numerical Algorithms},
	number = {1},
	pages = {31--72},
	title = {Multiscale matrix pencils for separable reconstruction problems},
	volume = {95},
	year = {2024}}

\end{document}